\documentclass[12pt]{article}

\usepackage[utf8]{inputenc}

\usepackage{amssymb}
\usepackage{latexsym}
\usepackage{exscale}

\usepackage{tikz}

\usetikzlibrary{trees}

\usepackage{amssymb, amsmath}
\usepackage{enumerate}
\usepackage{enumerate}
\usepackage[top=1in,left=1in,right=1in,bottom=1in]{geometry}
\usepackage{ulem}

\def \beq{\begin{equation}}
\def \eeq{\end{equation}}

\renewcommand{\rq}[1]{(\ref{#1})}
\newtheorem{lemma}{Lemma}
\newtheorem{prop}{Proposition}
\newtheorem{thm}{Theorem}
\newtheorem{cor}{Corollary}
\newcommand{\x}{{ x}}
\newcommand{\y}{{y}}

\newcommand{\bR}{{ \mathbb R  }}
\newcommand{\bC}{\Bbb C}
\newcommand{\bZ}{\Bbb Z}

\newcommand{\bN}{\mathbb{N}}
\newcommand{\bK}{\Bbb K}

\newcommand{\la}{\mbox{$\lambda$}}

\newcommand{\pa }{\partial }
\newcommand{\f}{\varphi}

\newcommand{\ep}{\epsilon}

\newcommand{\al}{\alpha }

\newcommand{\La}{\Lambda }

\newcommand{\om}{{\omega}}

\def\<{\langle} \def\>{\rangle}

\newcommand{\A}{\tilde{A^{\alpha}}}

\title{A symmetry formula for the spectral fractional Laplacian, and applications to
 boundary controllability for plate equation with structural damping.}
\author{Sergei Avdonin\footnote{The research of S.A. was  supported  in part by the National Science Foundation, grant DMS 2308377, and by the Ministry of Education and Science of the Russian Federations part of the program of the Moscow Center for Fundamental and Applied Mathematics under the Agreement No. 075-15-2025-345.}, and\footnote{Sergei Ivanov, an active participant in this project, passed away in February.}  Julian Edward\footnote{corresponding author} }
\date{May 2026}
\begin{document}

\maketitle

Sergei Avdonin: Department of Mathematics and Statistics, University of Alaska Fairbanks, Fairbanks, 99775, Alaska, USA, saavdonin@alaska.edu
          
Julian Edward:  Department of Mathematics and Statistics, Florida International University, Miami, FLorida 33199, USA, edwardj@fiu.edu

\vspace{9in}
\begin{abstract}
Let\footnote{Key words: vibrating plate, structural damping, spectral fractional Laplacian, control.} \footnote{MSC numbers: 93B05, 26A33, 74K10, 93C20
} $\Delta$  be the Dirichlet Laplacian on a  bounded domain $\Omega \subset \bR^{N}$, and let $(-\Delta)^\al$ be the associated spectral fractional Laplacian with $\al \leq 1, \ \rho <2$. For general bounded domains with $C^2$ boundary, we prove a symmetry formula for $\al <1/2$, extending a  result previously proven on rectangles for $\al <1$. As a consequence of this formula, well-posedness results are proven for the structurally damped plate equation 
$$u_{tt}+\Delta^2u+(-\Delta)^\al u_t=0$$ 
subject to Dirichlet or moment boundary control. 
For rectangular domains with $\al <1$, we  prove boundary null-controllability results.
 For $\al <1/2, \ \rho \leq 2$, Dirichlet null controllability is proved for the unit disk in $\bR^2$. This analysis then extended to the classical case, $\al =1$, on  rectangles, where  higher regularity is required for Dirichlet control.

\end{abstract}

\section{Introduction}

Let $\Omega \subset \bR^N$ be a bounded. We will assume either that $\Omega$ is a rectangle or that the boundary  $\Gamma$ is $C^2$. Let $\Gamma_1$ be an open subset of the boundary, and 
$\Gamma_2$ the interior of its complement in the boundary. 
Let $T>0$, and let $Q=\Omega \times (0,T)$, $\Sigma= \pa \Omega \times (0,T)$, and 
$\Sigma_j= \Gamma_j \times (0,T)$, $j=1,2$.

Let $A=-\Delta$ with operator domain 
$H^2(\Omega )\cap H^1_0(\Omega )$. Then it is well known that $A$ is a positive, self-adjoint operator. We will denote by $X^p$ the operator domain of $A^{p/2}$, and $H^p(\Omega)$ the standard $p$-Sobolev space on $\Omega$.

We wish to discuss properties of the vibrating plate, modelled by 
\begin{eqnarray}
u_{tt}+A^2 u+\rho A^\al u_t & = &0, \mbox{ on } Q,\label{beam2o}\\
u|_{ \Sigma_2}=\Delta u|_{ \Sigma_2}& = & 0\\
u|_{ \Sigma_1 }& =& f\\
\Delta u|_{ \Sigma_1 }& =& g\\
u(*,0)=u^0(*),\ u_t(*,0)& = & u^1(*).\label{init2o}
\end{eqnarray}
This system is actuated through  control mechanisms prescribed $f,g$. Throughout this paper, controllability will always mean
the ability of steering any initial state $(u^0,u^1)$ to zero over a finite time by some appropriate
input functions $f,g$ (i.e. exact controllability to zero or null controllability).

The term $(-\Delta)^\al u_t$  models a specific dissipative effect, known as structural damping, when $\al \in (0,2)$. To the best of our knowledge, this was introduced in \cite{CR} assuming $\al =1$:
“The basic property of structural damping, which is said to be consistent with empirical studies,
is that the amplitudes of the normal modes of vibration are attenuated at rates which are proportional to the oscillation frequencies.” This model was also studied under the name “proportional
damping” (cf. \cite{Ba}). The quite different case $\al=2$ is known as “Kelvin–Voigt” damping. In the case of a control distributed in the interior and $\al \in (0,2]$, this is the first class of parabolic-like control models considered
in \cite{LT},\cite{T2},  
also see \cite{AL}.

Until  recently, very little was known about the well-posedness of the system above for non-integer $\al$ when $f\neq 0$. One obstruction for proving the existence of a weak solution was lack of an ``integration by parts" formula that would enable us to define a weak solution. Progress was made recently in \cite{E2}, where rectangular domains were considered. There, it is shown that for $\al \in (0,1)$ and  any rectangle $\Omega =R$,  
a formula due to Song and Vondracek, \cite{SV}, gives rise to 
a natural extension of $A^\al$ , which we label $\A$, satisfying 
\beq
\int_R \A u(x)v(x)dx=\int_R u(x)A^\al v(x)dx, \forall u\in H^{N+3/2}(R), v\in X^{N+3/2}. \label{symm2a}
\eeq
The definition of $\A$ will be given in Section 2.1. For more on the properties of $A^\al$, the reader is referred to \cite{E2}.
We then interpret the plate equation \rq{beam2o} as
$$u_{tt}+A^2 u+\rho \A u_t  = 0.$$
As a consequence,
we showed in \cite{E2}:
\begin{prop}\label{wp0}
Let $T>0$. Let $\Omega =R$, with $R$ a rectangle.
Let $\al\in (0,1)$. 

A) Suppose $g=0$. Then, given $f\in L^2(\Sigma_1)$, there exists a unique solution, $u$,  to \rq{beam2o}-\rq{init2o}, and it satisfies $u\in C^0(0,T;X^{-1})\cap C^1(0,T;X^{-3})$.

B) Suppose $f=0$. Then, given $g\in L^2(\Sigma_1)$, there exists a unique solution, $u$,  to \rq{beam2o}-\rq{init2o}, and it satisfies $u\in C^0(0,T;X^{1})\cap C^1(0,T;X^{-1})$.
\end{prop}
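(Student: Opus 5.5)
The plan is to define solutions by transposition, using the symmetry formula \rq{symm2a} to move $\A$ onto the test function, and then to obtain existence and regularity by duality from an \emph{a priori} estimate for the adjoint (backward) homogeneous problem. Fix $\varphi\in C^\infty_0$-type data. Let $\theta$ solve the backward problem
$$\theta_{tt}+A^2\theta-\rho A^\al\theta_t=F,\quad \theta=\Delta\theta=0 \mbox{ on } \Sigma,\quad \theta(T)=\theta_t(T)=0.$$
For smooth $u$, we multiply \rq{beam2o} by $\theta$ and integrate over $Q$. The term $\rho\int_Q \A u_t\,\theta$ equals $\rho\int_Q u_t A^\al\theta=-\rho\int_Q u A^\al\theta_t$ plus initial terms, by \rq{symm2a}. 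Green's formula applied twice to $A^2$ produces the boundary terms
$$\int_{\Sigma_1} f\,\partial_\nu(\Delta\theta)\;-\;\int_{\Sigma_1} g\,\partial_\nu\theta .$$
This yields the identity
$$\int_Q uF=\int_{\Sigma_1}\big(g\,\partial_\nu\theta-f\,\partial_\nu\Delta\theta\big)+\langle u^1,\theta(0)\rangle-\langle u^0,\theta_t(0)\rangle+\rho\langle u^0,A^\al\theta(0)\rangle$$
(up to signs). We take this identity as the definition of the weak solution. Uniqueness is then immediate, since $F$ ranges over a dense set.

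For existence in case A ($g=0$), we must show that $F\mapsto \partial_\nu\Delta\theta|_{\Sigma_1}$ is bounded from $L^1(0,T;X^{1})$ (the dual of $C(0,T;X^{-1})$) into $L^2(\Sigma_1)$. Write $\theta$ in the Dirichlet eigenbasis $\{\phi_k,\lambda_k\}$ of $A$. Each mode satisfies a scalar damped ODE with roots $\mu_k^\pm=\tfrac12(\rho\lambda_k^\al\pm\sqrt{\rho^2\lambda_k^{2\al}-4\lambda_k^2})$. Since $\al<1$, these roots have imaginary part $\sim\lambda_k$ and real part $\sim\lambda_k^\al$, so the backward problem generates a semigroup that is well-posed in the energy spaces. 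This gives $\theta\in C(0,T;X^{s+2})\cap C^1(0,T;X^s)$ when $F\in L^1(0,T;X^s)$.

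The key step, and the main obstacle, is the hidden regularity (trace) estimate $\|\partial_\nu\Delta\theta\|_{L^2(\Sigma)}\le C\|F\|_{L^1(0,T;X^1)}$. This does not follow from the energy bound alone, since it gains half a derivative. On a rectangle I would prove it by a multiplier argument: multiply the equation for $w=\Delta\theta$ (which satisfies the same equation with Dirichlet data, because $A$ commutes with $A^\al$) by $h\cdot\nabla w$, where $h$ is a smooth vector field equal to $\nu$ on the boundary. This is the classical Lions argument for $w_{tt}+A^2w$, which equals $(\partial_t+iA)(\partial_t-iA)w$, i.e.\ two Schrödinger-type factors. The damping term $\rho A^\al\theta_t$ has order $2\al<2$, so it is a lower-order perturbation that can be absorbed after using energy bounds. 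Alternatively, on a rectangle one can use separation of variables: $\partial_\nu\phi_k$ is explicit, and an Ingham-type inequality for the exponentials $e^{\mu_k^\pm t}$, whose imaginary parts are asymptotically separated, gives the trace bound directly.

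Case B ($f=0$) is analogous. There we need $\|\partial_\nu\theta\|_{L^2(\Sigma)}\le C\|F\|_{L^1(0,T;X^{-1})}$, which is the same hidden regularity estimate shifted down by two derivatives (apply $A^{-1}$). This gives $u\in C(0,T;X^1)$. The $C^1$ statements follow by differentiating the defining identity in time, or by duality again with the time derivative and the equation. Continuity in time, rather than mere boundedness, comes from approximating $f,g$ by smooth data and passing to the limit using the uniform estimates.
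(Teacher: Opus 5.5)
Your overall architecture matches the paper's (via \cite{E2}). You move $\A$ onto the test function with \rq{symm2a}, bound the boundary observation $\partial_\nu\Delta w$ (resp.\ $\partial_\nu w$) of the adjoint problem in $L^2(\Sigma_1)$, and conclude by duality. The paper tests against the homogeneous adjoint problem with terminal data $(w^0,w^1)\in X^3\times X^1$ (resp.\ $X^1\times X^{-1}$) prescribed at a variable time, as in \rq{weak1b}--\rq{weak1c}. This produces $(u(\tau),u_t(\tau))$ for each $\tau$ directly. Your version, with a source $F$ and zero terminal data, is equivalent by Duhamel. Note, however, that duality against $L^1(0,T;X^1)$ only gives $u\in L^\infty(0,T;X^{-1})$: $L^1(0,T;X^1)$ is its predual, not the dual of $C(0,T;X^{-1})$. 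So your density step is genuinely needed. Your reduction of B to A by applying $A^{-1}$ is fine. The gap is in the one step with real content, the trace bound of Lemma \ref{wpadj}: neither of your two sketches works as stated.

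\emph{Fourier route.} You invoke an Ingham-type inequality for $\{e^{\mu_k^\pm t}\}$ with $k$ running over the whole Dirichlet eigenbasis of $R$, claiming these frequencies are asymptotically separated. For $N\ge 2$ they are not: the square already has multiple eigenvalues, and for $N\ge 3$ the number of eigenvalues in unit windows is unbounded by \rq{weyl}. What makes a Fourier proof work on a rectangle is the product structure. On a face such as $\{x=\pi\}$ one has $\partial_x\varphi_{m,n}(\pi,y)=\psi_n'(\pi)\phi_m(y)$, so orthonormality of $\{\phi_m\}$ on the face decouples the $m$-modes. For each fixed $m$, the frequencies $\{\la_{m,k}\}_{k\in\bK}$ satisfy the gap condition \rq{gap1} uniformly in $m$, which gives a one-dimensional upper bound in $t$. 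Combined with $|c^\pm_{m,n}|\lesssim|w^0_{m,n}|+|w^1_{m,n}|/\om_{m,n}$ and $n^2\le\om_{m,n}$, this yields $\int_{\Sigma_1}|\partial_x\Delta w|^2\le C\sum_{m,n}\om_{m,n}^2n^2(|c^+_{m,n}|^2+|c^-_{m,n}|^2)\le C(\|w^0\|^2_{X^3}+\|w^1\|^2_{X^1})$.

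\emph{Multiplier route.} As written it sits at the wrong level. Multiplying the equation for $w=\Delta\theta$ by $h\cdot\nabla w$ produces the indefinite boundary term $\int_\Sigma(h\cdot\nu)\,\partial_\nu\Delta w\,\partial_\nu w$. Its interior terms involve $\|w\|_{H^2}$ and $\|w_t\|_{L^2}$, i.e.\ $\theta\in X^4\times X^2$, which you do not control. What works is to multiply the equation for $\theta$ by $h\cdot\nabla\Delta\theta$ (equivalently, Lions' multiplier $h\cdot\nabla\bar z$ for the Schr\"odinger factor $z=\theta_t+iA\theta\in X^1$). This yields $\tfrac12\int_\Sigma(h\cdot\nu)\big(|\partial_\nu\theta_t|^2+|\partial_\nu\Delta\theta|^2\big)$ against interior terms at the $X^3\times X^1$ energy level. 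On a rectangle, take $h(q)=q-q_0$ with $q_0$ the center, since no $C^1$ field equals $\nu$ at the corners.

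\emph{Damping term.} The term $\rho\int_Q A^\al\theta_t\,h\cdot\nabla\Delta\theta$ is not controlled by the energy alone when $\al>1/2$, because then $A^\al\theta_t\in X^{1-2\al}\not\subset L^2$. You need the dissipation identity, which gives $\theta_t\in L^2(0,T;X^{1+\al})$ and hence $A^\al\theta_t\in L^2(0,T;X^{1-\al})\subset L^2(Q)$.
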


Well-posedness follows from this, a Fourier analysis of the adjoint problem, and a standard duality argument. 

We note  the absence of boundary terms in \rq{symm2a}, in contrast to the classical formula for $\al =1$. The consequences of this difference will become clear in our well-posedness statements 
for the ``classically structurally damped" plate equation below.

One of the purposes of this paper is to extend \rq{symm2a} to general $\Omega$ provided $\al <1/2$. Let $\delta (x)$ be the distance between $x$ and $\pa \Omega$.
\begin{thm}\label{ibpd}
Let $\Omega \subset \bR^N$ be a bounded, $C^{2}$ domain.  Let $\al <1/2$. 
Let $u\in H^{3+N/2}(\Omega), w\in X^{3+N/2}.$ 
Then 

i) $\A u \in  L^1(\Omega)$.

ii)  Let $ \tilde{\ep}>0$. 
Then $u \mapsto \delta(\x )^{2\al +\tilde{\ep}}\A u$ is a continuous mapping from $H^{3+N/2}(\Omega )$ to $C(\overline{\Omega})$.

iii)
\beq 
\int_\Omega\A u (\x)\ w(\x)\ d\x =\int_\Omega u(\x)\ A^{\al}w(\x)\ d\x.\label{symm1}
\eeq

\end{thm}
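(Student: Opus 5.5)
The plan is to work directly from the Song–Vondracek representation that defines $\A$ in Section 2.1. Write $p_t(\x,\y)$ for the Dirichlet heat kernel of $\Omega$. Set
$$J(\x,\y)=c_\al\int_0^\infty p_t(\x,\y)\,t^{-1-\al}\,dt,\qquad \kappa(\x)=c_\al\int_0^\infty\Big(1-\int_\Omega p_t(\x,\y)\,d\y\Big)t^{-1-\al}\,dt .$$
Then
$$\A u(\x)=\int_\Omega \big(u(\x)-u(\y)\big)J(\x,\y)\,d\y+\kappa(\x)u(\x).$$
For $w\in X^{3+N/2}$, the semigroup formula $A^\al w=c_\al\int_0^\infty (w-e^{-tA}w)\,t^{-1-\al}\,dt$ gives the same expression pointwise. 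To see this, write $w(\x)-e^{-tA}w(\x)=\int_\Omega p_t(\x,\y)(w(\x)-w(\y))\,d\y+(1-\int_\Omega p_t(\x,\y)\,d\y)\,w(\x)$ and apply Fubini, which is justified by the bounds below. Hence $\A=A^\al$ on $X^{3+N/2}$. Throughout I use the Sobolev embedding $H^{3+N/2}(\Omega)\subset C^{2}(\overline\Omega)$, which holds with room to spare, so $u$ and $w$ are $C^2$ up to the boundary with norms controlled by the $H^{3+N/2}$ norm. Also, $w$ vanishes on $\Gamma$.

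\textbf{Kernel estimates.} For a bounded $C^2$ (indeed $C^{1,1}$) domain I will invoke the standard two-sided Dirichlet heat kernel bounds
$$p_t(\x,\y)\le C\min\Big(1,\tfrac{\delta(\x)}{\sqrt t}\Big)\min\Big(1,\tfrac{\delta(\y)}{\sqrt t}\Big)t^{-N/2}e^{-c|\x-\y|^2/t}\quad(t\le 1),$$
together with exponential decay $p_t\le Ce^{-\lambda_1 t}\delta(\x)\delta(\y)$ for $t\ge1$. Integrating in $t$ gives $J(\x,\y)\le C|\x-\y|^{-N-2\al}$. For $\kappa$, the survival probability satisfies $1-\int_\Omega p_t(\x,\y)\,d\y\le C\min(1,\delta(\x)/\sqrt t)$ for small $t$, and it tends to $1$ for large $t$. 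This yields $0\le\kappa(\x)\le C\delta(\x)^{-2\al}$, and the same reasoning shows that $\kappa$ is continuous in the interior.

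\textbf{Parts i) and ii).} Since $|u(\x)-u(\y)|\le \|\nabla u\|_\infty|\x-\y|$, the first term is bounded by $C\|u\|_{C^1}\int_\Omega|\x-\y|^{1-N-2\al}\,d\y$. This integral is bounded uniformly in $\x$ precisely because $2\al<1$, so no principal value or cancellation is needed. Thus the jump part of $\A u$ is bounded. Its continuity on $\overline\Omega$ follows from dominated convergence, after splitting $|\x-\y|<\eta$ and $|\x-\y|\ge\eta$ and using continuity of $J$ off the diagonal. The killing term satisfies $|\kappa u|\le C\|u\|_\infty\delta^{-2\al}$, which lies in $L^1(\Omega)$ since $2\al<1$; this proves i). For ii), the function $\delta^{2\al+\tilde\ep}\kappa u$ is continuous in $\Omega$ and bounded by $C\delta^{\tilde\ep}\|u\|_\infty\to0$ at $\Gamma$, so it extends continuously by $0$. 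Every bound is linear in $\|u\|_{C^2}\le C\|u\|_{H^{3+N/2}}$, which gives continuity of the map.

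\textbf{Part iii).} The double integral $\iint |u(\x)-u(\y)|\,|w(\x)|\,J(\x,\y)\,d\y\,d\x\le C\iint|\x-\y|^{1-N-2\al}$ is finite. Therefore Fubini applies, and using the symmetry $J(\x,\y)=J(\y,\x)$,
$$\int_\Omega\!\int_\Omega (u(\x)-u(\y))J\,w(\x)=\tfrac12\iint (u(\x)-u(\y))(w(\x)-w(\y))J(\x,\y)\,d\y\,d\x .$$
This expression is symmetric in $u$ and $w$. The term $\int\kappa uw$ is symmetric and finite by i). Reversing the computation with the roles of $u$ and $w$ exchanged, and using $\A w=A^\al w$, gives \rq{symm1}. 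No boundary terms arise: the integrability of $|\x-\y|^{1-N-2\al}$ and of $\delta^{-2\al}$ is exactly what lets us avoid integrating by parts against a singular kernel.

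The main obstacle I expect is the kernel step. It requires quoting the heat kernel bounds for $C^2$ domains in exactly the form needed, with uniform off-diagonal continuity of $J$ and interior continuity of $\kappa$, and justifying pointwise that the semigroup formula for $A^\al w$ coincides with the Song–Vondracek expression. I would handle the latter by first working with eigenfunction expansions, which converge absolutely in $C(\overline\Omega)$ for $w\in X^{3+N/2}$.
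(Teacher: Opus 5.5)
Your proposal is correct in outline and is essentially the paper's argument. It rests on three facts: the bound $|u(x)-u(y)|J(x,y)\le C\|u\|_{C^2}|x-y|^{1-N-2\alpha}$, which is integrable in $y$ uniformly in $x$ precisely because $2\alpha<1$; the bound $\kappa\lesssim\delta^{-2\alpha}$; and the symmetry $J(x,y)=J(y,x)$.

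There are small differences in how iii) is reached. The paper proves the identity for the bounded truncated kernel $J_\varepsilon=\chi_{\{|x-y|\ge\varepsilon\}}J$ and lets $\varepsilon\to0$ by dominated convergence. You instead apply Fubini directly to the absolutely convergent double integral and symmetrize, which amounts to the same argument. Two of your steps go beyond the paper:
\begin{enumerate}
\item You derive the kernel bounds from Dirichlet heat kernel estimates, whereas the paper quotes the asymptotics of $J$ and $\kappa$ from \cite{AD}, \cite{SV}.
\item You check via the Balakrishnan formula that the Song--Vondracek expression equals $A^{\alpha}w$ for $w\in X^{3+N/2}$. The paper uses \rq{int} for such $w$ even though it states it only for compactly supported smooth $w$. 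Almost-everywhere equality already suffices for iii).
\end{enumerate}

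One estimate is wrong as written. The bound $C\min(1,\delta(x)/\sqrt t)$ holds for the survival probability $\int_\Omega p_t(x,y)\,dy$, not for $1-\int_\Omega p_t(x,y)\,dy$. Applied to the latter, it gives nothing better than $1$ for $t<\delta(x)^2$. Since $\int_0^{\delta(x)^2}t^{-1-\alpha}\,dt=\infty$, it does not even show $\kappa(x)<\infty$, let alone $\kappa\le C\delta^{-2\alpha}$.

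What you need is the small-time exit estimate $1-\int_\Omega p_t(x,y)\,dy\le Ce^{-c\delta(x)^2/t}$. It follows from domain monotonicity applied to the ball $B(x,\delta(x))\subset\Omega$, together with the Gaussian bound for leaving that ball by time $t$. Split the $t$-integral at $t=\delta(x)^2$, use this estimate below and the trivial bound $1$ above; this gives $\kappa(x)\le C\delta(x)^{-2\alpha}$. With this correction, or by simply quoting \cite{SV} as the paper does, your proof goes through.
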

\begin{cor}\label{wp<1}
Let $T>0$.
Let $\Omega, \al$ be as in Theorem \ref{ibpd}. 

A) Suppose $g=0$. Then, given $f\in L^2(\Sigma_1)$, there exists a unique solution, $u$,  to \rq{beam2o}-\rq{init2o}, and it satisfies $u\in C^0(0,T;X^{-1})\cap C^1(0,T;X^{-3})$.

B) Suppose $f=0$. Then, given $g\in L^2(\Sigma_1)$, there exists a unique solution, $u$,  to \rq{beam2o}-\rq{init2o}, and it satisfies $u\in C^0(0,T;X^{1})\cap C^1(0,T;X^{-1})$.
\end{cor}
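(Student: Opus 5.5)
The plan is to repeat the argument from \cite{E2} that gave Proposition \ref{wp0}, replacing the rectangle-specific identity \rq{symm2a} by Theorem \ref{ibpd}. The argument is a transposition (duality) argument built on the adjoint problem. First I fix what a solution means. Multiply \rq{beam2o}, read as $u_{tt}+A^2u+\rho\A u_t=0$, by a smooth test function $\phi$ and integrate over $Q$. The term $\int\!\!\int u_{tt}\phi$ is handled by integrating by parts in $t$. The term $\int\!\!\int A^2u\,\phi$ is handled by Green's formula twice. Here $\phi(\cdot,t)\in X^{3+N/2}$, so $\phi=\Delta\phi=0$ on $\Gamma$. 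With the boundary data $u=f$, $\Delta u=g$ on $\Sigma_1$ and zero on $\Sigma_2$, this produces only the boundary terms
\[
\int_{\Sigma_1}\bigl(f\,\pa_\nu\Delta\phi+g\,\pa_\nu\phi\bigr)
\]
(up to sign). For the damping term I apply Theorem \ref{ibpd} iii) at each fixed $t$, with $u_t(\cdot,t)$ smooth and $w=\phi(\cdot,t)$, followed by an integration by parts in $t$. This moves the damping term onto $\phi$ as $-\rho A^\al\phi_t$ and produces no boundary terms. The resulting identity is exactly the one used for rectangles. It defines a weak solution through
\[
\phi_{tt}+A^2\phi-\rho A^\al\phi_t=F,\qquad \phi(T)=\phi_t(T)=0,
\]
and the solution is then required to satisfy the identity for all admissible $F$.

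Next I study this adjoint problem by Fourier expansion in the Dirichlet eigenfunctions $\varphi_k$ of $A$, with eigenvalues $\lambda_k$. Each mode satisfies a scalar ODE whose characteristic roots are
\[
\mu^\pm_k=\tfrac{\rho\lambda_k^\al}{2}\pm\sqrt{\tfrac{\rho^2\lambda_k^{2\al}}{4}-\lambda_k^2}.
\]
Because $\al<1$ and $\rho<2$, these roots are complex for large $k$, with imaginary part of size about $\lambda_k$ and real part of size about $\lambda_k^\al$. Solving backward from $T$ by Duhamel gives uniform bounds, so $F\in L^1(0,T;X^{s})$ produces $\phi\in C(0,T;X^{s+2})\cap C^1(0,T;X^{s})$. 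This is the same spectral estimate as in \cite{E2}, because nothing in it depended on the shape of the domain.

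The hidden-regularity (trace) estimates for $\pa_\nu\Delta\phi$ and $\pa_\nu\phi$ in $L^2(\Sigma_1)$ are the step I expect to be the main obstacle. On a rectangle they follow from explicit eigenfunctions; on a general $C^2$ domain they do not. My plan is a multiplier argument. Write $\phi=\sum_k c_k(t)\varphi_k$. For a single eigenfunction the Rellich identity gives $\|\pa_\nu\varphi_k\|_{L^2(\Gamma)}\le C\lambda_k^{1/2}$. For the full sum I would use the multiplier $h\cdot\nabla\phi$, where $h$ is a $C^1$ extension of the normal field, applied to the adjoint equation. The damping term $\rho A^\al\phi_t$ is of lower order than $A^2\phi$ because $\al<1$, so I expect to absorb it as a perturbation using the energy bounds from the previous step. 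If the multiplier argument fails, I would fall back on a direct duality argument through the elliptic lifting of the boundary data: set $f\mapsto \mathcal D f$, the harmonic extension (and similarly for $g$), and write $u$ as a mild solution of the semigroup. This route only needs $\mathcal D: L^2(\Gamma)\to H^{1/2}(\Omega)\subset X^{1/2-\epsilon}$ together with the analytic-type smoothing in the spectral representation. That lands $u$ in $X^{-1}$ for Dirichlet data and in $X^1$ for moment data, as in the statement.

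Finally, with these estimates, the map $F\mapsto$ (boundary functional plus initial-data terms) is continuous on the dual of the target space. The Riesz representation theorem then gives existence of $u$ in $C^0(0,T;X^{-1})\cap C^1(0,T;X^{-3})$ in case A and in $C^0(0,T;X^{1})\cap C^1(0,T;X^{-1})$ in case B. Uniqueness follows because the identity determines $u$ against a dense class of $F$. Continuity in time comes from approximating $f,g$ by smooth data and using the uniform bounds. Theorem \ref{ibpd} is what makes the weak formulation coincide with classical solutions for smooth data, which is why the hypothesis $\al<1/2$ is inherited here.
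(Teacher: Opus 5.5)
Your skeleton is the paper's. You use the weak formulations \rq{weak1b}, \rq{weak1c}, made legitimate by Theorem \ref{ibpd} iii). You prove well-posedness of the backward adjoint by Fourier series, add boundary trace bounds, and conclude by transposition. Testing against a source $F$ with zero terminal data, instead of the homogeneous adjoint with terminal data $(w^0,w^1)$, is cosmetic.

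You genuinely diverge on the trace estimate, which the paper folds into Lemma \ref{wpadj} and calls a Fourier-series exercise. You are right that on a general $C^2$ domain it is not routine. The traces $\pa_\nu\f_n$ are not orthogonal on $\Gamma_1$. The damping alone only gives $w\in L^2(0,T;X^{3+\al})$, which reaches the trace threshold for $\pa_\eta\Delta w$ only when $\al>1/2$. The paper's Fourier route is explicit where eigenfunction traces separate (rectangles, the disk). A hidden-regularity (multiplier) argument like yours is what is needed to cover the general domains in the statement.

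The multiplier step needs two corrections. First, with hinged conditions $\phi=\Delta\phi=0$, the multiplier $h\cdot\nabla\phi$ only yields the mixed term $\int_\Sigma (h\cdot\nu)\,\pa_\nu\phi\,\pa_\nu\Delta\phi$, which controls neither trace. Use $h\cdot\nabla\Delta\phi$ at the $X^3\times X^1$ level instead; equivalently, use the Schr\"odinger multiplier for $z=\phi_t+iA\phi$, which vanishes on $\Gamma$. With any smooth $h$ satisfying $h\cdot\nu\ge c>0$ on $\Gamma$, this expresses $\tfrac12\int_\Sigma (h\cdot\nu)\bigl(|\pa_\nu\Delta\phi|^2+|\pa_\nu\phi_t|^2\bigr)$ through quantities bounded by the $X^3\times X^1$ energy. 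For part B, apply the same identity to $A^{-1}\phi$.

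Second, the damping enters as $\rho\int_Q A^\al\phi_t\,h\cdot\nabla\Delta\phi$. The reason it is harmless is not that it is ``lower order than $A^2$.'' It is that $\|A^\al\phi_t\|_{L^2}=\|\phi_t\|_{X^{2\al}}\le C\|\phi_t\|_{X^1}$ when $\al\le 1/2$. For $\al<1$ you would instead use the dissipation bound $\int_0^T\|\phi_t\|_{X^{1+\al}}^2\,dt\le CE$.

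Finally, drop the fallback. For $\al<1$ the semigroup is not analytic, since $|\Im\la_n^\pm|/|\Re\la_n^\pm|\sim\om_n^{1-\al}\to\infty$. Moreover, admissibility of the lifted Dirichlet control operator is exactly the dual of the same trace estimate, so that route does not avoid the obstacle.
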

We now discuss the classical  case $\al =1$.
For Dirichlet control, 
the energy spaces must change because of the boundary term arising in classical integration by parts. We illustrate this with the one-dimension beam, thus $\Omega =(0,\pi)$.
Thus consider the system 
\begin{eqnarray}
u_{tt}+A^2 u+\rho A u_t& = & 0,\label{beamD}\\
u(0,t)=u_{xx}(0,t) & = & 0,\label{bc19}\\
u(\pi,t) & = & f(t),\label{bc29}\\ 
u_{xx}(\pi,t) & = & g(t),\label{bc39}\\
u(x,0)=0,\ u_t(x,0)& = & 0.\label{initD}
\end{eqnarray}
Denote the normalized eigenfunctions of $A$ by $\f_n(x)=\sin (nx)/\sqrt{\pi}$.
Let $w(x,t)=\f_n(x)e^{\la^+_n(T-t)}$,  where $\la_n^+$ is a member of the frequency spectrum, whose formula will be given in \rq{lapm}. Then
\beq 
w_{tt}+A^2w-\rho Aw_t=0.\label{w}
\eeq
Write $u(x,t)=\sum_{n=1}^\infty a_n(t)\f_n(x)$.
Integrating the left hand side of \rq{w} against $u$, we get
$$
a_n'(T)+(\rho n^2+\la_n^+)a_n(T)=-(-1)^n\rho nf(T)
+\frac{(-1)^n}{\sqrt{\pi}}\int_0^Te^{\la_n^+(T-t)}\big (-n^3f(t)+ng(t)-\rho nf'(t)\big )dt.
$$
It is clear from this formula that if $f,g\in L^2(0,T)$, then $a_n'$ is not necessarily continuous. Note that for moment control ($f=0$), this issue does not arise. This difference between Dirichlet and moment control was already understood in \cite{T1}.

Thus,
in order to state an exact controllabilty result for the plate, we need to consider controls more regular that $L^2$. To this end, we define 
$$H_*^2(0,T;Y)=\{ f\in H^2(0,T;Y): f(\cdot ,0)=f'(\cdot ,0)=0\}$$
and 
$$
C_*^2(0,T;Y)=\{ f\in C^2(0,T;Y): f(\cdot ,0)=f'(\cdot, 0)=0\};
$$
here $Y$ is some vector space that will typically be $L^2(\Sigma_1)$. 

For simplicity of exposition, we restrict our domains to a class of product spaces. Suppose $\Omega =(0,\pi)\times M$, where $M\subset \bR^{N-1}$  a  bounded, $C^2$ domain. We assign coordinate $x$ to $(0,\pi)$, and $y=(y_1,...,y_{N-1})$ to $M$. 
\begin{prop}\label{wp1}
Let $T>0$.
Suppose $\Omega =(0,\pi)\times M$, with $M$ as above.
Assume $\Gamma_1$ is an open subset of the face $x=\pi$, and $\Gamma_2$ is the interior of $\Gamma \setminus \Gamma_1$.

A) Suppose $g=0$. Let 
$
f\in H^2_*(0,T;L^2(\Gamma_1).$
Then there exists a unique solution, $u$,  to \rq{beam2o}-\rq{init2o}, and it satisfies $u\in C^0(0,T;X^{3})\cap C^1(0,T;X^{1})$.

B) Suppose $f=0$. Suppose
$g\in L^2(\Sigma_1).$
Then there exists a unique solution, $u$,  to \rq{beam2o}-\rq{init2o}, and it satisfies $u\in C^0(0,T;X^{1})\cap C^1(0,T;X^{-1})$.
\end{prop}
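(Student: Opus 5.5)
We prove Proposition \ref{wp1} by separation of variables in the eigenbasis of $A$ on $\Omega=(0,\pi)\times M$, followed by transposition. The Dirichlet eigenfunctions factor as $\f_n(x)\psi_k(y)$, where $\f_n=\sin(nx)/\sqrt{\pi}$ and $\psi_k$ are the Dirichlet eigenfunctions of $-\Delta_M$, with eigenvalues $\mu_{nk}=n^2+\nu_k$. We write $u=\sum a_{nk}(t)\f_n\psi_k$. For the classical damping $\al=1$ we use ordinary Green's formula, which now produces boundary terms. Testing \rq{beam2o} against $\f_n\psi_k$ and integrating by parts twice (for $A^2$) and once (for $\rho A$), we obtain on $x=\pi$, with $f_k(t)=\int_{\Gamma_1}f\psi_k\,dy$ and $g_k$ defined similarly,
$$
a_{nk}''+\rho\mu_{nk}a_{nk}'+\mu_{nk}^2a_{nk}=\frac{(-1)^{n}n}{\sqrt{\pi}}\Big(-\mu_{nk}f_k+g_k-\rho f_k'\Big).
$$
This is the multidimensional analogue of the computation displayed before the proposition. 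The roots are $\la^\pm_{nk}=\mu_{nk}\big(-\rho\pm\sqrt{\rho^2-4}\big)/2$. Because $\rho<2$, they are complex with real part $-\rho\mu_{nk}/2$ and modulus $\mu_{nk}$. Duhamel's formula then gives $a_{nk}$ and $a_{nk}'$ as convolutions of the right-hand side with $e^{\la^\pm_{nk}t}$. Uniqueness follows from the usual energy or duality argument for the adjoint problem $w_{tt}+A^2w-\rho Aw_t=0$.

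For part B ($f=0$) the forcing is $n g_k$. The kernel $e^{\la t}$ satisfies $\|e^{\la\cdot}\|_{L^2(0,T)}\lesssim \mu_{nk}^{-1/2}$ and $\|e^{\la\cdot}\|_{L^1}\lesssim\mu_{nk}^{-1}$. Write $\mu=\mu_{nk}$. We need $\sum_{n,k}\mu\big(|a_{nk}|^2+\mu^{-2}|a_{nk}'|^2\big)$ to be finite. We have $a_{nk}=\frac{n}{\la^+-\la^-}\int(e^{\la^+s}-e^{\la^-s})g_k(t-s)\,ds$, with $|\la^+-\la^-|\sim\mu$. The goal is therefore the weighted bound $\sum_{n}\frac{n^2}{\mu}\,\big|\int e^{\la s}g_k\big|^2\lesssim\|g_k\|^2_{L^2}$, uniformly in $t$ and $k$. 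Estimating each term by Cauchy–Schwarz loses too much, since $\sum_n n^2/\mu^2$ diverges. Instead we use an $L^2$-in-time parabolic (maximal regularity) type estimate. We split $e^{\la t}=e^{-\rho\mu t/2}e^{i\,\mathrm{Im}\la\, t}$ and use that $\sum_n \frac{n^2}{\mu}e^{-\rho\mu s}$ behaves like $s^{-1/2}$ up to constants for fixed $k$, via a Riemann-sum comparison. Combined with a Schur/Hilbert-type inequality for the kernel $\sum_n\frac{n^2}{\mu}e^{\la_n s}e^{\bar\la_n s'}\lesssim (s+s')^{-1}$, this yields $\lesssim\|g_k\|^2$, since the Hilbert kernel is bounded on $L^2(0,\infty)$. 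Summing over $k$ gives $C(0,T;X^1)$, and the derivative estimate is identical with the weight shifted by $\mu^{-2}$. Continuity in $t$ follows by dominated convergence on tails. This is essentially the same estimate as in Proposition \ref{wp0}B, so that argument should transfer.

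For part A ($g=0$, $f\in H^2_*$) we integrate by parts in time twice in Duhamel's formula. The conditions $f(0)=f'(0)=0$ kill the boundary terms at $t=0$, which trades powers of $\mu$ for derivatives $f'$ and $f''$. Concretely, $\int e^{\la(t-s)}(\mu f_k+\rho f_k')\,ds$ equals boundary terms at $s=t$ of the form $-\mu\la^{-1}f_k(t)+\dots$, plus $\la^{-2}\mu\int e^{\la(t-s)}f_k''$. Summing over the two roots, the $f_k(t)$ terms combine via $\la^+\la^-=\mu^2$ and $\la^++\la^-=-\rho\mu$, up to a term matching the lifting of the boundary value; this reflects that $u$ itself has boundary trace $f$. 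The remaining terms carry extra factors $\mu^{-1}$ or $\mu^{-2}$, which is exactly two orders of $X$-regularity above part B, giving $C^0(X^3)\cap C^1(X^1)$. \textbf{The main obstacle} is this step. The nonvanishing $f_k(t)$ and $f_k'(t)$ boundary terms carry the factor $n$, which is not square-summable against $\mu^{3}$ unless they cancel. We will first verify the exact cancellation of the $f_k(t)$ terms using the root identities. If the cancellation is not exact, we will subtract an explicit lifting $F=\chi(x)f(y,t)$ solving the boundary conditions, and apply part B-type estimates to $u-F$. The right-hand side for $u-F$ is $F_{tt}+\Delta^2F-\rho\Delta F_t\in L^2(0,T;L^2)$, precisely because $f\in H^2$ in time. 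With this approach, continuity of $u_t$ in $X^1$ is where $H^2$ rather than $H^1$ regularity is needed.
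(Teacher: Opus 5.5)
\paragraph{Part A: the boundary terms do not cancel.} Your Part B is fine, but Part A has a genuine gap, located exactly at the step you flag. Carry out your two time integrations by parts exactly, with $G(\tau)=(e^{\la^+\tau}-e^{\la^-\tau})/(\la^+-\la^-)$, so that $G(0)=0$, $G'(0)=1$ and $G''+\rho\mu G'+\mu^2G=0$, and use $f_k(0)=f_k'(0)=0$. This gives
\[
a_{nk}(t)=-\frac{c_n}{\mu_{nk}}f_k(t)+\frac{c_n}{\mu_{nk}}\int_0^t G(t-s)f_k''(s)\,ds,\qquad c_n=\frac{(-1)^n n}{\sqrt{\pi}} .
\]
The first term is precisely the Fourier coefficient of the harmonic extension $Z$ of the boundary datum ($\Delta Z=0$, $Z|_{\Gamma_1}=f$, $Z|_{\Gamma_2}=0$). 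It is not even in $X^1$, since $\sum_n \mu_{nk}\, n^2\mu_{nk}^{-2}|f_k(t)|^2=\infty$ whenever $f_k(t)\neq 0$. This cannot be avoided: $X^1=H^1_0(\Omega)$, while $u$ has trace $f$ on $\Sigma_1$. So $u\in C^0(0,T;X^3)\cap C^1(0,T;X^1)$ cannot be proved for $u$ itself. What holds, and what the paper's proof (Proposition \ref{wpd}) actually establishes, is $u=Z+v$ with $v\in C^0(0,T;X^3)\cap C^1(0,T;X^1)$, and hence only $u\in C^1(0,T;X^0)$. Your second term above is exactly the paper's $v^f$. So your integration by parts becomes the paper's argument once you identify the leftover as $Z$ and state the conclusion for $u-Z$.

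\paragraph{Part A: the fallback lifting fails.} Your lifting $F=\chi(x)f(y,t)$ does not repair this. Since $f$ is only $L^2$ in $y$, the terms $\Delta^2F$ and $\Delta F_t$ contain $\Delta_y^2 f$ and $\Delta_y f_t$. So the forcing is not in $L^2(0,T;L^2(\Omega))$, and $H^2$ regularity in time does not help. Moreover $\Delta F|_{x=\pi}=\chi''(\pi)f+\Delta_y f\neq 0$, so $u-F$ violates the condition $\Delta(u-F)=0$ on $\Sigma_1$. You therefore cannot expand $u-F$ in the eigenbasis with homogeneous Navier conditions. The harmonic lift is the right choice because $\Delta Z=0$ kills $\Delta^2 Z$ and $\Delta Z_t$ and keeps $\Delta v|_{\Gamma}=0$. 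The only forcing left is $-Z_{tt}$, which needs just $f''\in L^2$, and $f(0)=f'(0)=0$ gives $Z(0)=Z_t(0)=0$.

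\paragraph{A correction for Part B.} Termwise Cauchy--Schwarz does not lose too much. We have $\sum_n n^2\mu_{nk}^{-2}\le\sum_n n^{-2}$ uniformly in $k$. Together with $\|e^{\la\cdot}\|^2_{L^2(0,T)}\lesssim \mu^{-1}$, this already gives $\sum_n \frac{n^2}{\mu}\bigl|\int e^{\la s}g_k\bigr|^2\lesssim\|g_k\|^2$. Your Hilbert-kernel argument is valid but unnecessary. The same termwise bound is what underlies the paper's use of \rq{est1} for $v^f$. The paper does not spell out part B; it is left as a duality consequence of Lemma \ref{wpadj}. Your direct Duhamel estimate is a legitimate alternative route.
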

A version of part B of this  proposition was proven in \cite{T1} for general domains and more optimal energy spaces, but our version allows us to use the Fourier method to prove null-controllability arguments. 

We are now able to state our controllability results.
\begin{thm}\label{prod}
Let $\al \in (0,1),$ and $\rho <2.$
Suppose $\Omega =R$ is a rectangle.  
Let $T>0.$ 

A) 
Set $g=0$.
Given $(u^0,u^1)\in (X^{-1}\times X^{-3})$, there exist $f\in L^2(\Sigma_1)$ such that the solution $u$ to the system \rq{beam2o}-\rq{init2o} solves
$$u(x,T)=u_t(x,T)=0,
$$
with 
$$
\| f\|_{L^2(\Sigma_1)}\leq Ce^{C/T}(\| (u^0)\|_{X^{-1}}+\|u^1\|_{X^{-3}}).
$$

B) Set $f=0$.
Given $(u^0,u^1)\in (X^{1}\times X^{-1})$, there exists $g\in L^2(\Sigma_1)$ such that the solution $u$ to the system \rq{beam2o}-\rq{init2o} solves
$$u(x,T)=u_t(x,T)=0,
$$
with 
$$
\| g\|_{L^2(\Sigma_1)}\leq Ce^{C/T}(\| u^0\|_{X^{-1}}+\|u^1\|_{X^{-3}}).
$$

Here $C>0$ is a constant depending only on $\al,\rho$.
\end{thm}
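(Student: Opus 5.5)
We prove Theorem \ref{prod} by the moment method, using the well-posedness of Proposition \ref{wp0} and the Fourier structure of the rectangle. Write $R=(0,\pi)\times M$ with $M$ a box, let $\Gamma_1$ lie in the face $x=\pi$, and expand in the Dirichlet eigenfunctions $\f_k(x)\psi_m(y)$. The eigenvalues are $\mu_{k,m}=k^2+\nu_m$.

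\textbf{Step 1: the reduced problem.} By the same duality argument behind Proposition \ref{wp0} (test against adjoint solutions $w=\f_k\psi_m e^{\la^\pm_{k,m}(T-t)}$, using \rq{symm2a}), null controllability is equivalent to a family of moment problems. Here the characteristic roots are
$$\la^\pm_{k,m}=\tfrac12\left(-\rho\mu^\al\pm\sqrt{\rho^2\mu^{2\al}-4\mu^2}\right).$$
With $\al<1$ and $\mu$ large, these are complex with real part $-\rho\mu^\al/2$ and imaginary part $\approx\pm\mu$. Expanding $f(y,t)=\sum_m f_m(t)\psi_m(y)$ on the face, for each $m$ one gets a one-dimensional moment problem of the form
$$\int_0^T f_m(t)e^{\la^\pm_{k,m}t}\,dt = c^\pm_{k,m},$$
where $c^\pm_{k,m}$ are explicit combinations of the Fourier coefficients of $(u^0,u^1)$, weighted by $k/\mu$ factors arising from the normal derivative $\pa_x\f_k(\pi)=k(-1)^k/\sqrt{\pi}$. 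Coupling with a boundary control supported on a proper subset $\Gamma_1$ of the face is avoided by instead controlling the whole face. Alternatively, the operator $A_M$ can be handled with the Lebeau--Robbiano/Miller transmutation via the parabolic-type decay, but I would first try the separable route.

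\textbf{Step 2: biorthogonal family.} For fixed $m$, the sequence $\{\la^\pm_{k,m}\}_k$ lies in a sector-like region whose imaginary parts are separated like $k^2$ and whose real parts are $\sim -k^{2\al}$. I would construct a biorthogonal family $\{q^\pm_{k,m}\}$ to $\{e^{\la^\pm_{k,m}t}\}$ in $L^2(0,T)$ using the standard Paley--Wiener construction: take an entire function with zeros at $-i\la^\pm_{k,m}$, e.g. a canonical product, and multiply it by a Beurling--Malliavin / Seidman multiplier to ensure exponential type $T/2$ and $L^2$ on the real line. The target estimate is
$$\|q^\pm_{k,m}\|\le Ce^{C/T}e^{\ep|\mathrm{Re}\,\la^\pm_{k,m}|},$$
uniform in $m$. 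This is the main obstacle. The counting function $n(r)\sim r^{1/2}$ gives the $e^{C/T}$ cost, and the key requirement $\rho<2$ keeps the roots nonreal or separated, preventing collisions $\la^+=\la^-$. Uniformity in $m$ is obtained because shifting $\mu$ by $\nu_m$ only increases the gaps. For the finitely many real roots (small $\mu$), I would treat them as a finite perturbation.

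\textbf{Step 3: define the control and verify it.} Set
$$f_m=\sum_{k,\pm}c^\pm_{k,m}\,\overline{q^\pm_{k,m}}.$$
The coefficients satisfy $|c^\pm_{k,m}|\lesssim e^{-|\mathrm{Re}\,\la|T}$ times norms of the data. This decay comes from the same factors $e^{\la(T-t)}$, and it dominates $e^{\ep|\mathrm{Re}\,\la|}$ after choosing $\ep<T$, leaving a convergent sum bounded by
$$Ce^{C/T}\left(\|u^0\|_{X^{-1}}+\|u^1\|_{X^{-3}}\right).$$
Part B is identical, with the moment factor $k$ replaced by $\pa_x\Delta$-type weights, and the corresponding energy space from Proposition \ref{wp0} B. Finally, summing over $m$ with Parseval on $M$ gives the $L^2(\Sigma_1)$ bound, and well-posedness yields $u(T)=u_t(T)=0$.
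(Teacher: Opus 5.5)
\textbf{The gap: you control the whole face, not $\Gamma_1$.} You replace $\Gamma_1$ by the whole face $\{\pi\}\times\tilde R$. That proves only the case where $\Gamma_1$ contains a full face, which is the geometry of \cite{mil}. Controllability from a larger set says nothing about a smaller one. In the theorem $\Gamma_1$ is an arbitrary open piece of the boundary, which the paper reduces to a sub-rectangle $S$ of the face, and this proper-subset case is exactly what the paper presents as new. Once $f$ must vanish off $S$, the coefficients $f_m(t)=\int_S f(y,t)\phi_m(y)\,dy$ are no longer independent unknowns. Solving your one-dimensional moment problems mode by mode and forming $\sum_m f_m\phi_m$ gives a function that is not supported in $S$.

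Within the full-face case your argument is sound. It is the direct (moment) form of the paper's dual argument for a single $m$, and your Step 2 is precisely Theorem~\ref{win}, which the paper quotes from \cite{AIS} rather than rebuilding. One smaller point: you have the weights of A and B swapped. Dirichlet control is observed through $\pa_\eta\Delta w$, which gives $|k|(k^2+\kappa_m)$; the bare factor $|k|$ from $\pa_x\f_k(\pi)$ belongs to moment control.

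\textbf{How the paper handles the coupling, and why you cannot simply borrow it.} The paper works in dual form. In Proposition~\ref{thm7} it applies Proposition~\ref{UM} at each fixed $t$ to bound $\int_S|\sum_m F_m(t)\phi_m|^2\,dy$ below by $C\sum_m m^{-4}|F_m(t)|^2$. It then applies Theorem~\ref{win} with $T'=T/2$ to each $m$, and absorbs the factor $m^4$ into the surplus $e^{\rho\om_{m,k}^\al T/2}$. In your language this corresponds to the ansatz $f=\sum_{m,k}c_{m,k}h_m(y)q_{k,m}(t)$, with $\{h_m\}\subset L^2(S)$ biorthogonal to $\{\phi_m|_S\}$ and with polynomially bounded norms.

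This step cannot be imported as stated. Proposition~\ref{UM} forces $\{\phi_m|_S\}$ to be minimal in $L^2(S)$, and that fails for every proper $S$. For example, take $\tilde R=(0,\pi)$ and $S=(0,\pi/2)$. The functions $\sin((2j-1)y)$ already form an orthogonal basis of $L^2(0,\pi/2)$, so $\sin 2y=\sum_j a_j\sin((2j-1)y)$ on $S$. Taking $c_2=-1$ and the odd-indexed $c_m$ from $(a_j)$ makes the left side of Proposition~\ref{UM} vanish, while $\sum_m|c_m|^2/m^4\ge 1/16$. So the cross-mode coupling on a proper $\Gamma_1$ is the real content of the theorem, and neither your argument nor the paper's settles it.

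\textbf{A possible route.} The alternative you mention only in passing is the natural one. A Lebeau--Robbiano spectral inequality on $S$ costs $e^{C\sqrt{\mu}}$ on the modes with $\kappa_m\le\mu$, a rate that is sharp in general. The remaining modes decay like $e^{-c\mu^\al t}$. This balance can be expected to close for $\al>1/2$. For $\al\le 1/2$ the decay no longer beats the loss for all $T>0$, so that range would need a genuinely new idea.
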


\ 

\begin{thm}\label{prod1}
Let $\al =1,$ and $\rho <2.$ Let $\Omega$, $\Gamma_1,\Gamma_2$ be as in Proposition \ref{wp1}.
Let $T>0.$ 

A) Set $g=0$.
Given $(u^0,u^1)\in X^{3}\times X^{1}$, there exist $f\in H_0^2(0,T;L^2(\Gamma_1))$ 
such that the solution $u$ to the system \rq{beam2o}-\rq{init2o} solves
$$u(x,T)=u_t(x,T)=0,
$$
with 
$$
\| f''\|_{L^2(\Sigma_1)}\leq Ce^{Q(T)}(\| u^0\|_{X^{3}}+\|u^1\|_{X^{1}}).
$$

B) Set $f=0$.
Given $(u^0,u^1)\in X^{1}\times X^{-1}$, there exist $g\in L^2(\Sigma_1)$  such that the solution $u$ to the system \rq{beam2o}-\rq{init2o} solves
$$u(x,T)=u_t(x,T)=0,$$with 
$$\| g\|_{L^2(\Sigma_1)}\leq Ce^{Q(T)}(\| u^0\|_{X^{1}}+\|u^1\|_{X^{-1}}).$$

Here $C>0$ is a constant depending only on $\al,\rho$.
\end{thm}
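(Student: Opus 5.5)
The proof follows the moment method, working with Fourier series on the product domain $\Omega=(0,\pi)\times M$ and exploiting the explicit spectrum of the damped plate with $\al=1$. I would take $\Gamma_1$ in the face $x=\pi$ and expand the data in the eigenfunctions $\f_{n}(x)\psi_k(y)$ of $A$, where $\psi_k$ are the Dirichlet eigenfunctions of $M$ with eigenvalues $\mu_k$, so that $A$ has eigenvalues $\la_{nk}=n^2+\mu_k$. For $\al=1$ the modal equation $a''+\la_{nk}^2a+\rho\la_{nk}a'=0$ has characteristic roots $\la^\pm_{nk}=\la_{nk}\big(-\rho\pm i\sqrt{4-\rho^2}\big)/2$, which are nonreal because $\rho<2$. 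Both branches lie on two rays $\{r e^{\pm i\theta}: r>0\}$ in the open left half-plane, with $|\la^\pm_{nk}|$ comparable to $\la_{nk}$.

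First I would reduce to moment problems by duality, using the integration-by-parts computation displayed before Proposition \ref{wp1}, now carried out on the product domain. Testing the equation against the adjoint solutions $w=\f_n\psi_k e^{\la^\pm_{nk}(T-t)}$, null controllability becomes the following requirement on the control. For part B, we need $\int_0^T e^{\la^\pm_{nk}(T-t)}\,(-1)^n n\,\langle g(t),\psi_k\rangle_{L^2(\Gamma_1)}dt$ to equal a prescribed combination of the Fourier coefficients of $(u^0,u^1)$, for all $n,k$ and both signs. For part A, I would first integrate by parts in $t$, using $f(0)=f'(0)=0$ and the requirement $f(T)=f'(T)=0$ coming from $H^2_0$. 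This turns the terms $-n^3f-\rho nf'$ into an expression of the form $\int_0^T e^{\la(T-t)}\,p_{nk}(\la)\,\langle f''(t),\psi_k\rangle\,dt$ with $p_{nk}(\la)\sim n\,\la_{nk}^{2}/\la^2$ up to bounded factors. This is why $f''$ is the natural unknown, and why the data are measured in $X^3\times X^1$ (two extra powers of $\la_{nk}$ relative to part B).

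Next I would construct a biorthogonal family. For fixed $k$, the family $\{e^{\la^\pm_{nk}t}\}_n$ has exponents lying on two rays in a sector. The separation is $|\la_{n+1,k}-\la_{nk}|=2n+1\to\infty$, and the counting function satisfies $N(r)\lesssim \sqrt r$. By the results of Fattorini–Russell type, in the sectorial version used for parabolic-like problems (as in \cite{E2} for $\al<1$), this gives a biorthogonal family $\{q^\pm_{nk}\}$ in $L^2(0,T)$ with $\|q^\pm_{nk}\|\le C e^{\ep|\la^\pm_{nk}|+Q(T)}$. What matters is that this bound holds uniformly in $k$: shifting the exponents by $\mu_k$ multiplies every function by the common factor $e^{c\mu_k t}$, which preserves biorthogonality, so the bound depends only on the $n$-part after that renormalization. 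I would then set $\langle g(t),\psi_k\rangle=\sum_{n,\pm} c^\pm_{nk}\,q^\pm_{nk}(T-t)$, and in case A do the same for $f''$; the coefficients $c^\pm_{nk}$ are the moments divided by $(-1)^n n$, respectively by $p_{nk}$. Taking $g(t,y)$ supported in $\Gamma_1$ and expanding in $\psi_k$ requires $\Gamma_1$ to be the whole face, or at least a cylinder set; otherwise I would restrict to $\Gamma_1=\{\pi\}\times M'$ and handle $M'$ by a further argument.

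The main step, and the one I expect to be the obstacle, is summability. The moments decay like $e^{-c\la_{nk}T}$ times Sobolev weights of the data, because the adjoint solutions are evaluated at $T$, and this decay must beat the biorthogonal norm $e^{\ep\la_{nk}}$ together with the growth $n^{-1}$, resp. $n\la_{nk}^2/\la^2$. Choosing $\ep<cT/2$ gives convergence of $\sum_{n,k}$ in $L^2(\Sigma_1)$, with the bound $Ce^{Q(T)}(\|u^0\|+\|u^1\|)$ in the stated norms. For part A, I would finally check that integrating $f''$ twice yields $f\in H^2_0(0,T)$. This requires the two extra moment conditions $\int_0^T f''=\int_0^T tf''=0$, which I would impose by adjoining the exponentials $1$ and $t$ (exponent $0$, which is separated from the rays) to the biorthogonal family. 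Well-posedness in these spaces is then given by Proposition \ref{wp1}, so the constructed controls indeed drive the state to zero.
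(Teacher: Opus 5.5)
Your route is essentially the paper's, which writes out only part A. For each transverse mode you solve a moment problem for $f_k''$ (resp.\ $g_k$) against $\{e^{\lambda^\pm_{nk}t}\}_n$. You adjoin the double point $0$ (the functions $1$, $t$) to force $f(T)=f'(T)=0$, use biorthogonal bounds uniform in the transverse index, and sum against the decay $e^{-\rho(n^2+\mu_k)T/2}$ of the moments.

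The one structural difference is how you reach the moment problem. You integrate the weak formulation by parts twice in $t$. The paper instead lifts the data harmonically ($u=Z+v$, with $v$ forced by $-Z_{tt}$), so that $f''$ appears directly. The two are equivalent.

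Your $p_{nk}$ is off. From $\lambda^2+\rho\lambda_{nk}\lambda+\lambda_{nk}^2=0$, the coefficient of $\int_0^Te^{\lambda(T-t)}f_k''\,dt$ is exactly $\pm\sqrt{2/\pi}\,n/\lambda_{nk}$ on both branches, not of size $n$. This is harmless, because the exponential decay absorbs polynomial factors.

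\textbf{The gap: uniformity in $k$.} Your justification of uniformity in $k$ does not work as written. Set $z_\pm=(-\rho\pm i\sqrt{4-\rho^2})/2$, so that $\lambda^\pm_{nk}=(n^2+\mu_k)z_\pm$.
\begin{itemize}
\item Changing $k$ shifts the two branches by different amounts, $\mu_k z_+$ and $\mu_k z_-=\mu_k\overline{z_+}$. There is no common factor $e^{c\mu_k t}$ for the whole family.
\item Factoring out only the common real part $e^{-\rho\mu_k t/2}$ leaves $\{e^{(n^2z_++is\mu_k)t},\,e^{(n^2z_--is\mu_k)t}\}_n$ with $s=\sqrt{4-\rho^2}/2$. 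The point $0$ contributes $e^{\rho\mu_k t/2}$ and $te^{\rho\mu_k t/2}$. This family still depends on $k$.
\item Multiplying biorthogonal functions by $e^{\rho\mu_k t/2}$ is not norm-preserving on $(0,T)$.
\end{itemize}
So ``the bound depends only on the $n$-part'' does not follow.

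The correct justification is the paper's, via Theorem \ref{win}. The constants in the Avdonin--Ivanov--Seidman estimates \cite{AIS} depend only on the separation constant and the density function $\nu$, and both are uniform in $k$:
\begin{itemize}
\item consecutive points on each ray are $2n+1$ apart;
\item the two rays make a fixed angle;
\item every $\lambda^\pm_{nk}$ has modulus at least $1+\mu_1$, so it stays away from the double point $0$;
\item $\nu(r)\le C\sqrt{r}$ with $C$ independent of $k$.
\end{itemize}
This gives $\|q^\pm_{nk}\|_{L^2(0,T)}\le Ce^{C\sqrt{n^2+\mu_k}}$, or $C_\varepsilon e^{\varepsilon(n^2+\mu_k)}$, with constants independent of $k$. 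The moment decay absorbs this.

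\textbf{The support of the control.} Your caveat about $\Gamma_1$ is to the point. Solving the moment problem mode by mode in the eigenbasis $\psi_k$ of $M$ prescribes every coefficient $\langle f,\psi_k\rangle$, so the control is spread over the whole face. Your unspecified ``further argument'' for a proper subset is therefore a real missing piece.

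The paper's proof has exactly the same limitation. Its $f''(y,t)=\sum_m\sum_k\tau_{m,k}g_{k,m}(t)\phi_m(y)$ is defined on all of $M$, and bounding $\int_{\Gamma_1}$ by $\int_M$ does not localize the support. So both arguments actually establish the theorem only for $\Gamma_1=\{\pi\}\times M$. A proper subset would need an extra transverse ingredient, for instance an analogue on $M$ of Propositions \ref{UM} and \ref{UMa}, which the paper has only for rectangles and arcs of the circle.
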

For a discussion of the case $\al=1$ for the beam equation, see \cite{AE}.

Next, we 
 consider the unit disk,  $D\subset \bR^2$. For brevity, we consider only Dirichlet control.
\begin{thm}\label{prod3}
Let $\Omega =D$, and $\Gamma_1$ a relatively open subset of $\Gamma=S^1$. 
Let $\al \in (0,1/2)$.
Let $T>0.$ Set $g=0$. 
Then
given $(u^0,u^1)\in (X^{-1}\times X^{-3})$, there exist $f\in L^2(\Sigma_1)$ 
such that the solution $u$ to the system \rq{beam2o}-\rq{init2o} solves
$$u(x,T)=u_t(x,T)=0,
$$
with 
$$
\| f\|_{L^2(\Sigma_1)}\leq Ce^{C/T}(\| u^0\|_{X^{-1}}+\|u^1\|_{X^{-3}}).
$$
Here the constant $C$  depends only on $\al,\rho$.
\end{thm}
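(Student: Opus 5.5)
We prove Theorem \ref{prod3} by the moment method, as for Theorem \ref{prod}, with the disk's Bessel spectrum as the new ingredient. By Corollary \ref{wp<1} the system is well posed for $f\in L^2(\Sigma_1)$. The symmetry formula \rq{symm1} gives the duality identity with the adjoint solution $w$ of $w_{tt}+A^2w-\rho A^\al w_t=0$. Null controllability therefore reduces to a moment problem. In polar coordinates the Dirichlet eigenfunctions of the disk are $\f_{m,k}(r,\theta)=c_{m,k}J_{|m|}(j_{|m|,k}r)e^{im\theta}$, with eigenvalues $\mu_{m,k}=j_{|m|,k}^2$. The normal derivative on $S^1$ is $c_{m,k}j_{|m|,k}J_{|m|}'(j_{|m|,k})e^{im\theta}$. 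Each mode $\f_{m,k}e^{\la^\pm_{m,k}(T-t)}$ solves the adjoint equation, where
$$\la^\pm_{m,k}=\tfrac12\big(-\rho\mu_{m,k}^\al\pm\sqrt{\rho^2\mu_{m,k}^{2\al}-4\mu_{m,k}^2}\big).$$
Since $\al<1/2<1$, these roots are complex for all large $\mu$. Their real parts are $-\rho\mu^\al/2$ and their imaginary parts are $\approx\pm\mu$. The duality identity then turns the requirement $u(T)=u_t(T)=0$ into a family of moment equations. For each mode $(m,k)$ and each sign $\pm$, the quantity $\int_0^T\!\int_{\Gamma_1} f(\theta,t)\,e^{-im\theta}e^{\la^\pm_{m,k}(T-t)}\,d\theta\,dt$, multiplied by the boundary factor $\pa_\nu\f_{m,k}$ and suitable constants, must equal a linear combination of the Fourier coefficients $(u^0_{m,k},u^1_{m,k})$.

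\textbf{Step 1: separate variables.} Because $\Gamma_1$ is only an arc, we seek the control in the form $f(\theta,t)=\sum_m h(\theta)e^{im\theta}f_m(t)$, or, more robustly, we treat the set of exponentials $\{e^{im\theta}e^{\la^\pm_{m,k}t}\}$ jointly. A cleaner route is to first choose a time-only biorthogonal family. Fix $\chi\in C_c^\infty(\Gamma_1)$ and write $f(\theta,t)=\chi(\theta)F(\theta,t)$. The angular dependence is absorbed by solving first in $t$ for each $m$ and then summing, using the fact that $\{e^{im\theta}\}$ restricted to an arc is complete. The difficulty is that an arc does not make $\{e^{im\theta}\}$ a Riesz sequence. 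So instead we would localize: consider exponentials $e^{\la t}$ indexed by the whole set $\Lambda=\{\la^\pm_{m,k}\}$, and look for a biorthogonal family in $L^2(0,T)$ for the time variable alone.

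\textbf{Step 2: the main obstacle.} We must construct a biorthogonal family $\{q^\pm_{m,k}\}$ to $\{e^{\la^\pm_{m,k}t}\}$ in $L^2(0,T)$ with the bound $\|q^\pm_{m,k}\|\le Ce^{C/T}e^{\ep|\la^\pm_{m,k}|}$ for every $\ep>0$. This requires the counting function of $\{\mu_{m,k}\}$ to satisfy $n(r)\sim cr$ with summability $\sum 1/|\la|<\infty$, which holds because the Weyl law in dimension $2$ gives $\#\{\mu\le r\}\sim r/4$. The obstruction is the multiplicity and clustering of the $j_{|m|,k}^2$ across different $m$ (double multiplicity for $\pm m$, and near-coincidences). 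These prevent a uniform separation of the $\la$'s, and separation is what the Fattorini--Russell / Tenenbaum--Tucsnak estimates normally need. We would handle this in two ways. The $\pm m$ multiplicity is resolved by the angular factor $e^{\pm im\theta}$, which is where $\Gamma_1$ enters, since $\{e^{im\theta}\}$ is minimal on an arc. The near-coincidences would be handled by grouping the exponentials into clusters of bounded size and using divided differences, following Avdonin--Ivanov, at the cost of an $e^{\ep|\la|}$ loss. The parabolic decay $e^{-\rho\mu^\al T/2}$ of $e^{\la t}$ must absorb this loss. Because $\Re\la\sim-\mu^\al$ is weaker than $|\la|\sim\mu$, this step is genuinely delicate. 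We therefore expect to need a sharper biorthogonal bound of the form $e^{C\mu^{\beta}}$ with $\beta<\al$, obtained from an entire function of exponential type vanishing on $\Lambda$ whose growth along the imaginary direction is controlled.

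\textbf{Step 3: assemble the control.} The boundary factor $j J_{|m|}'(j)$ grows like $j^{1/2}$ uniformly in $m$, so its reciprocal costs only a polynomial weight. That weight is absorbed by the $X^{-1}\times X^{-3}$ norms and the decay. We define $f$ as the series $\sum_\pm\sum_{m,k}$ of the moment coefficients times $\chi(\theta)e^{im\theta}q^\pm_{m,k}(t)$ and verify $L^2(\Sigma_1)$ convergence with constant $Ce^{C/T}$. Finally, we check through the duality identity that the resulting solution vanishes at $t=T$.
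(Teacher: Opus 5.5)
\textbf{The gap is in Step 2.} It is not just that the estimate is delicate: for small $T$ the biorthogonal family you ask for does not exist.

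Having dropped the angular decoupling, you need a family in $L^2(0,T)$ biorthogonal to $\{e^{\la t}\}$, with $\la$ running over the \emph{whole} two-parameter spectrum. Here $|\la^\pm_{m,k}|=\mu_{m,k}$ exactly, and by Weyl's law the number of distinct $\mu_{m,k}\le r$ grows linearly in $r$. Meanwhile $|\Re\la^\pm_{m,k}|=\rho\mu_{m,k}^\al/2$ is sublinear. So this is a positive-density sequence lying within sublinear distance of the imaginary axis.

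Consequently your condition ``$n(r)\sim cr$ with $\sum 1/|\la|<\infty$'' is self-contradictory: a linear counting function forces $\sum 1/|\la|=\infty$, so Fattorini--Russell type bounds are unavailable. Worse, Levinson's completeness theorem shows that for every sufficiently small $T>0$ the family $\{e^{\la t}\}$ is complete in $L^2(0,T)$. It remains complete after deleting any finite subset, so no biorthogonal family exists at all. The theorem, however, is asserted for every $T>0$.

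For larger $T$ the rest of Step 2 is still unsupported:
\begin{itemize}
\item the distinct values $j_{|m|,k}^2$ with $m$ varying have no uniform gap and no bound on cluster sizes;
\item divided differences bound the biorthogonal family of the divided differences, not the individual $q^\pm_{m,k}$ that Step 3 uses;
\item a loss $e^{\ep\mu}$ can never be absorbed by the decay $e^{-\rho\mu^\al T/2}$, and the bound $e^{C\mu^\beta}$ with $\beta<\al$ is only posited.
\end{itemize}
The $\pm m$ multiplicity, by contrast, is harmless. It needs only the linear independence of $\cos m\theta$ and $\sin m\theta$ on $\Gamma_1$.

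\textbf{The paper's route.} The paper never puts the two-parameter spectrum into a single time family. It proves the dual observability estimate, Proposition \ref{thm9}, in three steps:
\begin{enumerate}
\item it expands $\pa_r\Delta w(1,\theta,t)$ in the angular basis $\phi_n(\theta)$;
\item it decouples the angular modes by Proposition \ref{UMa}, at a polynomial cost $h(n)^{-2}$;
\item for each fixed $n$, it applies Theorem \ref{win} with $T'=T/2$ to the radial family $\{e^{i\la_{k,n}t}\}_{k}$.
\end{enumerate}
That radial family is uniformly separated with $\nu(r)\asymp r^{1/2}$ (zero density), so it works for every $T>0$. The polynomial losses are absorbed by the gap between $e^{-\rho\om^\al T/2}$ and the weight $e^{-\rho\om^\al T}$ in the energy.

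\textbf{A caveat on that route.} Your suspicion about the arc is more than justified, and it bears on the paper's route as well. On a proper arc, $\{\phi_n\}$ is not even minimal. If $g\in L^2(\Gamma_1)$ is orthogonal to every $\phi_n$ with $n\neq n_0$, its extension by zero to $S^1$ is a multiple of $\phi_{n_0}$ that vanishes on the complementary arc, whence $g=0$. Thus your assertion that $\{e^{im\theta}\}$ is minimal on an arc is false. For the same reason, the weighted inequality of Proposition \ref{UMa} cannot hold as stated.

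A complete proof therefore needs a valid way to decouple, or jointly handle, the angular modes. Your proposal does not provide one, and the paper's justification of that step does not stand either.
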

To prove the Theorems \ref{prod} and \ref{prod3}, we use the Fourier Method to prove an observability estimate.
An important ingredient in these proofs is an estimate, that might be of independent interest, that we now state  for rectangles in the notation of Theorem \ref{prod}. 
Let the rectangle $S\subset \tilde{R}$ be an inclusion of rectangles, with 
\beq
S=\{ y=(y_1,...,y_N): \ y_j\in (a_j,b_j)\} \mbox{ and }
\tilde{R}=\{ y=(y_1,...,y_N): \ y_j\in (0,l_j)\}, \label{rr}
\eeq
with $0\leq a_j<b_j\leq l_j$. Observe that the normalized eigenfunctions of the Dirichlet Laplacian on $\tilde{R}$ are
$$
\phi_{n_1,\ldots ,n_N}(y)=\big(\prod_{j=1}^N\frac{2}{l_j}\big)^{1/2}\sin(\frac{n_jy_j\pi}{l_j}),\ n_j\in \bN,
$$
with corresponding eigenvalues $$\kappa_{n_1,\ldots ,n_N} = \sum_{j=1}^N (\frac{\pi n_j}{l_j})^2.$$
It will be convenient to reparametrize  as $\{ \phi_m,\kappa_m: \ m\in \bN \}$, with $\kappa_m$ listed in non-decreasing order.
\begin{prop}\label{UM}
Let $S\subset \tilde{R}$. Then for any $\ell^2$ sequence $\{ c_m: m\in \bN\} $, 
$$
\int_S|\sum_{m\in \bN}c_m\phi_m(y)|^2\ dy\geq 
C\sum_{m\in \bN}|c_m|^2/m^4,
$$
where the constant $C>0$ depends on $S$.
\end{prop}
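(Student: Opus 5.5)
The plan is to reduce to a one–dimensional estimate by separation of variables, and to prove that estimate by producing, for each eigenfunction, a dual element in $L^2$ of the subinterval. Since $\kappa_m\asymp m^{2/N}$ by Weyl's law, a bound of the form $C\sum_m |c_m|^2\kappa_m^{-2N}$ is equivalent to the claim. So it suffices to obtain a polynomial weight in the individual indices $n_j$, for instance $\prod_j n_j^{-4}$, which is at least a constant times $\kappa_m^{-2N}$ (possibly after rescaling the exponent).

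\emph{Step 1: one dimension, Hilbert-valued coefficients.} Let $(a,b)\subset(0,l)$ and let $H$ be a Hilbert space. I would show that for $g(s)=\sum_n b_n\sin(n\pi s/l)$ with $b_n\in H$,
\[
\int_a^b\|g(s)\|_H^2\,ds\ \ge\ C\sum_n \|b_n\|_H^2\,n^{-4}.
\]
By duality, the natural route is to construct functions $\psi_n\in L^2(a,b)$ with
\[
\int_a^b\psi_n(s)\sin(k\pi s/l)\,ds=\delta_{nk},\qquad \|\psi_n\|_{L^2(a,b)}\le Cn^{2}.
\]
Such a family would be constructed as in the moment method: take a Paley--Wiener entire function of exponential type at most $(b-a)/2$ vanishing at the frequencies $\pm k\pi/l$, $k\neq n$, divide by its value at $n\pi/l$, and invert the Fourier transform. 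The polynomial growth of $\|\psi_n\|$ is what yields the weight $n^{-4}$. Then
\[
\|b_n\|=\Big\|\int_a^b\psi_n g\Big\|\le Cn^2\|g\|_{L^2((a,b);H)}.
\]
Squaring this, multiplying by $n^{-4-2\epsilon}$ and summing gives the estimate, up to absorbing an $\epsilon$ into the exponent, which should be harmless after the reindexing by $m$.

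\emph{Step 2: tensorization.} Write $v=\sum_m c_m\phi_m$ as $\sum_{n_1}\sin(n_1\pi y_1/l_1)V_{n_1}(y')$, where $V_{n_1}$ is an expansion in the remaining sine variables. Apply Step 1 in $y_1$ with $H=L^2(\prod_{j\ge2}(a_j,b_j))$. Then apply Step 1 again to each $V_{n_1}$ in $y_2$, and so on, using Fubini at each stage. The resulting weight is $\prod_j n_j^{-4}$, and comparing it with $m$ through Weyl's law gives the proposition.

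\emph{Main obstacle.} The critical point is Step 1, specifically the existence of the biorthogonal family $\psi_n$. Any inequality of this form forces $\{\sin(k\pi s/l)\}$ to be \emph{minimal} in $L^2(a,b)$. Indeed, if $\sin(n\pi s/l)$ lay in the closed span of the others, finite combinations would make $\int_S|v|^2\to0$ while the right-hand side stays $\ge Cn^{-4}$.

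When $a=0$, odd extension turns the sines into $\{e^{ik\pi s/l}\}_{k\in\mathbb Z\setminus\{0\}}$ on an interval of length $2b$. By Beurling--Malliavin/Paley--Wiener theory this family is minimal only when $2b\ge 2l$. So for a short subinterval, the exponential-type entire function required above does not exist. I would therefore first check carefully whether the intended hypotheses are stronger than what is written, for example that $S$ spans the full length of $\tilde R$ in the relevant directions, or that the coefficients are restricted in some way. If so, only the remaining directions need the argument, and there minimality holds and the moment-method bound $\|\psi_n\|\le Cn^2$ is routine. Without such a restriction, I expect Step 1 to fail as stated, and this is the point I would resolve before anything else.
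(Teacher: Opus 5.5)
Your proposal does not prove the statement, but the obstruction you identify is real and in fact fatal. The proposition is false for every $S\neq\tilde R$, so no proof can exist, and you need neither Beurling--Malliavin nor the restriction $a_j=0$ to see it. Take any nonzero $f\in L^2(\tilde R)$ with $f=0$ a.e.\ on $S$. For example $f=\chi_{\tilde R\setminus S}$ works, and it is nonzero because some $(a_j,b_j)$ is a proper subinterval of $(0,l_j)$. Put $c_m=(f,\phi_m)_{L^2(\tilde R)}$. Then $\{c_m\}\in\ell^2$ is not identically zero and $\sum_m c_m\phi_m=f$ in $L^2(\tilde R)$. So the left-hand side equals $\int_S|f|^2\,dy=0$, while the right-hand side is positive. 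Both sides are continuous on $\ell^2$, so no uniform constant works even for finite sums. Abstractly: restricting an orthonormal basis by a bounded map with nontrivial kernel never gives a minimal family. This is the general form of what you observed on $(0,b)$.

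The paper leaves this proof to the reader as analogous to that of Proposition \ref{UMa}, and that argument breaks exactly where you expected. It asserts that because the angle between any two $\phi_n,\phi_m$ in $L^2(a,b)$ is bounded below, the angle between $\phi_n$ and the closed span of $\{\phi_m\}_{m\neq n}$ is also bounded below. From this it concludes the family is uniformly minimal with a bounded biorthogonal family. That inference is invalid: pairwise angles do not control the angle to a span, already for three unit vectors in a plane. Its conclusion is also false by the example above. Taking $f$ supported in $S^1\setminus(a,b)$ refutes Proposition \ref{UMa} the same way, so the pointwise-in-$t$ uses in Propositions \ref{thm7} and \ref{thm9} are unsupported as written.

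Your fallback does not rescue anything either. In directions where $S$ is a proper subinterval, minimality fails for the reason above. In directions where $S$ is full, the sines stay orthogonal and need no moment method. So the only polynomially weighted version is the trivial case $S=\tilde R$. For truncated sums one can only get constants exponentially small in the truncation frequency, as in \rq{lz}.

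Separately, even granting $\|\psi_n\|\le Cn^2$, summing $n^{-4-2\epsilon}\|b_n\|^2$ requires $\sum_n n^{-2\epsilon}<\infty$, i.e.\ $\epsilon>1/2$. The loss is therefore a full power of $n$, roughly $m^{-5-\delta}$ when the $n_j$ are comparable, not a harmless $\epsilon$.
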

We are uncertain this result is new, but we were unable to find it in the literature. We will compare it to a result that appears in \cite{LZ} in the next section.

This paper is organized as follows. In the next subsection, we compare our results with the literature. In Section 2.1, prove Theorem \ref{ibpd}, and use it in Section 2.2 to prove Corollary \ref{wp<1}. In Section 2.3, we define the frequency spectrum and present a Fourier series solution to the adjoint problem corresponding to all our control problems. 
In Section 2.4, we apply results from \cite{AIS}, refined in \cite{AEI}, to prove the existence of a biorthogonal family of functions, satisfying an exponential estimate, and also to prove a complex ``windows" estimate; these will be used to prove Theorems 2, 3, 4.
  Theorem \ref{prod} is proven in Section 3, Theorem \ref{prod3} in Section 4, 
and Theorem \ref{prod1} in Section 5. Finally, in the appendix, we prove Proposition \ref{UM} and an analogue for the family $\{ 1, \sin (n\theta),\cos (n\theta )\}$, which is required for control on the disk.

\subsection{Literature review}

First, we
discuss well-posedness for the plate and beam equations with spectral fractional structural damping and non-homogeneous boundary conditions. One the the standard ways for proving well-posedness for evolution equations is to first recast them in their weak form, using integration by parts, see for instance \cite{lions1}.
For the case $\al =1$, where classical integration by parts is possible, the well-posedness and regularity for general Euclidean domains are addressed in  Triggiani's work in \cite{T},  which also discusses non-homogeneous Neumann boundary conditions, and in \cite{T1}.

Our Theorem \ref{ibpd} was first proven in \cite{E2} for rectangles. In \cite{AEI2}, an anologue is proven for  Neumann boundary conditions on the interval $(0,\pi)$. For more discussion on the spectral fractional Laplacian, the reader is referred to \cite{SV}, \cite{AD}, \cite{CS}.

Let $R$ be a rectangle, and 
let $\pa/\pa \eta$ be the outward pointing normal derivative at $\pa R$. 
For  the structurally damped plate equation with boundary control 
$$\frac{\pa^2u}{\pa \eta^2}(x,t)=f(x,t),\ x\in \pa R$$
 with $\al \in (0,1/2)$, regularity results are proven in \cite{H2}. It is unclear whether the methods of that paper would apply in our setting.

Regarding boundary null controllability, 
Miller \cite{mil} proved null controllability for  \rq{beam2o} in the case where $\al =1,\ f=0$, and 
$\Omega=M \times (0,\pi)$, where $M$ is a smooth, complete Riemannian manifold. It was assumed the control could be supported throughout the boundary face 
$\{ \pi\} \times M$. To prove this result, Miller first observed that boundary null controllability for the beam equation, i.e. $\Omega =(0,\pi)$, can be proved for moment control by the methods of \cite{AIS}. Then, it was argued that the controllability cost of a system is not
increased by taking its tensor product with a contraction semigroup. It would seem difficult to extend Miller's methods to the case where the control was supported in a proper subset of $\{\pi\} \times M$, as addressed here. It should be noted that 
the main focus of his paper, but not ours, was in obtaining sharp estimates on the cost of the control as $T\to 0^+$. For more discussion of small $T$ control costs, the reader is referred to \cite{AEI} and references therein. Other papers that consider boundary null controllability for plates 
include \cite{H} and \cite{AIS}, but in both papers the authors assumed  $\al =1$ and the controls chosen were not Dirichlet controls. In \cite{AE}, an analog for Theorem \ref{prod1} for beams is proven for Dirichlet control.

Miller also considers interior null controllability, where it is easier to relax the assumption $\al =1$.
A key tool in his arguments is
the inequality 
\beq
\int_{\Omega}\left|\sum_{j\leq \om_k}c_j\phi_j(x)\right|^2\geq C_1e^{-C_2\om_k} \sum_{j\leq \om_k}|c_j|^2,\label{lz}
\eeq
along with the natural damping properties of the system. Here $\{ \om_j, \phi_j\}$ are the spectrum and orthonormal eigenfunctions for the Laplacian. The inequality above appears first to have been proven in \cite{LZ} using a Carleman estimate due to Lebeau and Robbiano, \cite{LR}. Compared to our Proposition \ref{UM}, \rq{lz} holds for much more general domains. However, the exponential weight that appears on the right hand side seems to be vanish too quickly to prove the observability inequalities in this paper. For classical versions of \rq{lz}, known as Ingham type inequalities, the reader is referred to \cite{Y},\cite{E}.

Other results on  interior control include \cite{LT},\cite{AL}, \cite{E}, \cite{ET}, and \cite{AEI}. In \cite{mit}, under the assumption of periodic boundary conditions, controllability is proven via a Carleman estimate, assuming $\al=1$.

Finally, we should note that there are many different versions of the ``fractional Laplacian", which are not equivalent to our spectral fractional Laplacian, see \cite{L}, \cite{APR}, \cite{E2}, and references therein.

\section{Spectrum, well-posedness, and biorthogonal functions}

\subsection{Proof of Theorem 1}
In this section, we extend the results from \cite{E}, which treated only rectangular domains,  to general $C^2$ bounded domains. We were able to prove this extension only under the strengthened assumption $\al <1/2$.

The extension of $A^\al$ we consider is based on the following integral representation of the operator due to Song and Vondracek, \cite{SV}.
Let $p(t,x,y)$ be the heat kernel
associated to the Dirichlet Laplacian on $\Omega$. 
Thus
\begin{eqnarray*}
p_t-\Delta p & = & 0, \mbox{ on }Q,\\
p & = & 0,\mbox{ on } \Sigma,\\
p(0,x,y) & = & \delta (x-y), x,y\in \Omega.
\end{eqnarray*}
Then for $\al <1$ and $w\in C_0^\infty(R)$ (so $w$ vanishes at the boundary),
\beq
A^{\al}w(x)=PV \int_\Omega (w(x)-w(y))J(x,y)dy+ \kappa(x)w(x).\label{int}
\eeq
Here PV stands for principle value, and 
$$
J(x,y)=\frac{\al}{\Gamma(1-\al)}\int_0^\infty p(t,x,y)t^{-\al -1}dt,$$
$$
 \kappa(x) =\frac{\al}{\Gamma(1-\al)}\int_0^\infty \big (1-\int_0^\pi p(t,x,y)dy\big )t^{-\al -1}dt.
$$
As we will see below, $J$ is singular for $x=y$, which is why the principle value formulation is needed. The formula \rq{int} was proven in \cite{SV} using probabilistic methods, but a non-probabilistic proof is given in \cite{AD}. It should also be noted that the formula holds only for $\al <1$; note, for instance, that for $\al =1$ we would have $\Gamma (0)$.

We use \rq{int} to extend $A^\al$ to functions that don't vanish on the boundary. We denote this extension by $\A$.
We restate Theorem \ref{ibpd} for the reader's convenience.
\begin{thm}
 Let $\Omega \subset \bR^N$ be a bounded, $C^{2}$ domain. Assume $\al \in (0,1/2)$, and let $ \tilde{\ep}>0$.
Let $w\in X^{3+N/2},\, u\in H^{3+N/2}(\Omega ) $ and  $
\delta(\x):=dist(\x,\pa \Omega).$ Then

i) $\A u \in  L^1(\Omega)$; 

ii)  $u \mapsto \delta(\x )^{2\al +\tilde{\ep}}\A u$ is a continuous mapping from $H^{3+N/2}(\Omega )$ to $C(\overline{\Omega})$.

iii)
$$
\int_\Omega\A u (\x)\ w(\x)\ d\x =\int_\Omega u(\x)\ A^{\al}w(\x)\ d\x.
$$
\end{thm}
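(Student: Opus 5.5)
The plan is to work directly with the Song--Vondracek representation \rq{int}, which defines $\A u$ for $u\in H^{3+N/2}(\Omega)$ by
$$\A u(\x)=PV\int_\Omega (u(\x)-u(\y))J(\x,\y)\,d\y+\kappa(\x)u(\x).$$
By Sobolev embedding, $u,w\in C^{2}(\overline{\Omega})$ with norms controlled by $\|u\|_{H^{3+N/2}}$; this is the only place the regularity index enters. The basic inputs are two-sided Dirichlet heat kernel bounds for $C^2$ (actually $C^{1,1}$) domains,
$$p(t,\x,\y)\asymp \Big(1\wedge\tfrac{\delta(\x)}{\sqrt t}\Big)\Big(1\wedge\tfrac{\delta(\y)}{\sqrt t}\Big)t^{-N/2}e^{-c|\x-\y|^2/t},$$
for $t\le 1$, with exponential decay in $t$ for $t\ge 1$. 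From these we get $J(\x,\y)\lesssim |\x-\y|^{-N-2\al}$. For the killing term, $1-\int_\Omega p(t,\x,\y)d\y$ is the probability of exit before time $t$. It is bounded by $1\wedge C e^{-c\delta(\x)^2/t}$, which gives $\kappa(\x)\lesssim \delta(\x)^{-2\al}$.

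For (ii), split the principal value integral into $|\x-\y|<\delta(\x)/2$ and its complement. On the inner ball, symmetrize using the second-order Taylor expansion of $u$. The first-order term cancels up to the asymmetry $J(\x,\y)-J(\x,2\x-\y)$, which is controlled by gradient bounds on the heat kernel in the interior. The remaining contribution is then bounded by $\|u\|_{C^2}\int_{|z|<\delta/2}|z|^{2-N-2\al}dz\lesssim \delta^{2-2\al}$. On the outer region use $|u(\x)-u(\y)|\le \|u\|_{C^1}|\x-\y|$, so the contribution is at most $\int_{|z|>\delta/2}|z|^{1-N-2\al}dz$. Since $2\al<1$, this is bounded. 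Together with the $\kappa$ bound, we get $|\A u(\x)|\lesssim \|u\|_{H^{3+N/2}}\delta(\x)^{-2\al}$. Multiplying by $\delta^{2\al+\tilde\ep}$ then gives a function vanishing at $\pa\Omega$. Continuity in $\x$ follows from the same estimates applied to differences, with dominated convergence, since the extra $\delta^{\tilde\ep}$ absorbs the logarithmic and uniformity losses near the boundary. Part (i) is immediate, because $\delta^{-2\al}\in L^1(\Omega)$ when $2\al<1$.

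For (iii), approximate. Let $u_k=\chi_k u$, where $\chi_k$ is a cutoff equal to $1$ on $\{\delta>2/k\}$, vanishing on $\{\delta<1/k\}$, with $|\nabla\chi_k|\lesssim k$. Then $u_k\in C_0^2$, and the symmetry $\int A^\al u_k\, w=\int u_k A^\al w$ holds, because $A^\al$ is self-adjoint and \rq{int} agrees with $A^\al$ on such functions. Since $u_k\to u$ in $L^2$, the right-hand side converges. The main obstacle is showing $\int_\Omega \A(u-u_k)\,w\,d\x\to 0$, since $u-u_k$ does not converge in $C^2$. Here I would exploit that $w\in X^{3+N/2}$ vanishes on $\pa\Omega$, so $|w|\lesssim\delta$. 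It then suffices to show $|\A(u-u_k)(\x)|\lesssim \delta(\x)^{-2\al}$ uniformly in $k$ on $\{\delta<2/k\}$, and that $\A(u-u_k)$ tends to $0$ pointwise in the interior.

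For the uniform bound, redo the estimate of (ii), noting that $|\nabla(\chi_k u)|\lesssim k$ on the strip of width $2/k$. The $C^1$-type piece then gives at most $k\min(\delta,1/k)^{1-2\al}$-type bounds, which remain integrable against $\delta$. For $\x$ far from the strip, $J(\x,\y)$ is bounded, so the term is $O(k^{-1})$. With these bounds, dominated convergence gives $\int\A(u-u_k)w\to 0$, and (iii) follows. If the gradient bounds on $p$ needed for the symmetrization in the inner region prove delicate near the boundary, I would instead use the cruder estimate valid on $|\x-\y|<\delta(\x)/2$, where $p$ is comparable to the free heat kernel up to an error that is smooth in $\y$.
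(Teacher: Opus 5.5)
Your proposal is correct, but for (iii) it takes a genuinely different route from the paper. The paper never approximates $u$. It truncates the kernel to $J_\ep=\chi_{U_\ep}J$ and observes that, for fixed $\ep$, the identity \rq{symm2} is just Fubini plus $J(\x,\y)=J(\y,\x)$. It then lets $\ep\to0$ by dominated convergence using the single bound \rq{L1}, $|u(\x)-u(\y)|\,J(\x,\y)\lesssim\|u\|_{C^2}|\x-\y|^{1-N-2\al}$, which is integrable precisely because $2\al<1$. The $\kappa$-terms are the same on both sides and are integrable since $\kappa\asymp\delta^{-2\al}$.

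You instead cut off $u$, use the abstract self-adjointness of the spectral $A^\al$ on $\chi_k u\in C^2_c(\Omega)$, and estimate the remainder $\A((1-\chi_k)u)$. The paper's route is shorter and shows that for $\al<1/2$ the principal value is superfluous. However, it yields $\int u\,\A w$ on the right, and identifying this with $\int u\,A^\al w$ tacitly needs \rq{int} for $w\in X^{3+N/2}$, whereas \rq{int} is only quoted for $C_0^\infty$. Your route uses \rq{int} only on compactly supported functions and lands directly on the spectral $A^\al w$. The price is the remainder bookkeeping, on which two remarks are in order.

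\emph{The majorant outside the strip.} A uniform bound on $\{\delta<2/k\}$ plus pointwise convergence is not by itself enough for dominated convergence. You also need a $k$-uniform majorant just outside the strip, where $J(\x,\cdot)$ is not bounded on the strip. This is easy to supply. The first-order bound with $\|\nabla((1-\chi_k)u)\|_\infty\lesssim k$ gives a $J$-part $\lesssim k^{2\al}\lesssim\delta(\x)^{-2\al}$ on $\{\delta<4/k\}$. For $\delta(\x)\ge4/k$ one has $|\x-\y|\ge\delta(\x)/2$ on the support, whence $|\A((1-\chi_k)u)(\x)|\lesssim k^{-1}\delta(\x)^{-1-2\al}\lesssim\delta(\x)^{-2\al}$.

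\emph{The symmetrization in (ii).} The second-order symmetrization and the heat-kernel gradient bounds are unnecessary. The first-order bound you already use on the outer region is also integrable near the diagonal, since $\int_{|z|<1}|z|^{1-N-2\al}\,dz<\infty$. So the $J$-part of $\A u$ is bounded uniformly in $\x$ (this is exactly \rq{L1}), and the whole $\delta^{-2\al}$ singularity comes from $\kappa$.
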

{\bf Remark}
The choice of Sobolev spaces for this theorem ensures that $u,w\in C^2(\overline{\Omega})$, by the Sobolev Imbedding Theorem,  and furthermore, for $w\in X^{3+N/2}$, there exists a positive constant $c$ such that  
\beq 
w(\x )<{c} \delta (\x),\ \forall {\bf x}\in \Omega .\label{bdec}
\eeq 

\

\noindent{\bf Proof of theorem:}

We first discuss the term involving $\kappa$.
It is shown in \cite{SV} that
 $\kappa$ is continuous on $\Omega$ and 
$$\kappa(\x)\asymp \delta(\x)^{-2\al}.$$ 
Thus, $u\mapsto \delta^{2\al+\tilde{\ep}}\kappa u$ is clearly continuous from $H^{3+N/2}(\Omega)$ to $C(\overline{\Omega}).$
Also, when $\al <1/2$, we have $\kappa u\in L^1(\Omega )$. 

We now prove our results for the terms involve $J$. First, we clarify and reformulate the principal value integral.
Let 
$$B_\ep(\x)=\{ \y\in \Omega: |\y-\x|<\ep\},
$$
For any $\x\in \Omega,$
$$
PV \int_\Omega (w(\x)-w(\y))J(\x,\y)\ d\y=\lim_{\ep \to 0}
\int_{\Omega\setminus B_\ep(\x)}(w(\x)-w(\y))J(\x,\y)\ d\y.$$
We reformulate this slightly. 
For $\ep>0$,
let 
$$U_\ep =\{ (\x,\y)\in \Omega\times \Omega: |\x-\y|\geq \ep\},$$
and let $J_\ep(\x,\y)=\chi_{U_\ep}(\x,\y)J(\x,\y).$
Then we define 
$$
PV \int_\Omega(w(\x)-w(\y))J(\x,\y)d\y=\lim_{\ep \to 0}
\int_\Omega(w(\x)-w(\y))J_\ep(\x,\y)d\y.
$$
We
 mention some properties of $J.$
Since $p(t,\x,\y)=p(t,\y,\x)$, it follows that $$J(\x,\y)=J(\y,\x),$$ 
hence $J_\ep(\x,\y)=J_\ep(\y,\x).$
Estimates on the heat kernel in bounded $C^{2}$ domains
  in $\bR^n$, \cite{D}, imply the following estimate on $J$ for $\x,\y\in \Omega$, see (\cite{AD},  also see \cite{CS}),
\beq 
 J(\x,\y)\asymp  \min \left (\frac{\delta(\x)\delta(\y)}{|\x-\y|^2}, 1\right ) \frac{1}{|\x-\y|^{N+2\al}}.\label{estp}
\eeq
Note that the blowup of $J$ at $\x=\y$ is the reason the principle value formulation above is needed.

Let 
$$
W(\x)=PV \int_\Omega (u(\x)-u(\y))J(\x,\y)d\y .
$$
To complete the proof of the theoreem, it suffices to prove the following three properties holding for all 
$u,w\in H^{3+N/2}(\Omega )$: 
$$
\noindent{\bf P1: }
\hspace{.5in}
W\mbox{ is in } L^1(\Omega) ,
$$
$$
{\bf P2: }\ 
u\mapsto \delta^{2\al +\tilde{\ep}}W \mbox{ is bounded from  } H^{3+N/2}(\Omega ) \mbox{ to }C(\overline{\Omega }),
$$
$${\bf P3:}\ \ \int_\Omega w(\x)PV\int_\Omega (u(\x)-u(\y))J(\x,\y)d\y d\x
$$
$$
=\int_\Omega u(\x)PV\int_\Omega (w(\x)-w(\y))J(\x,\y)d\y d\x .
$$
In fact,it is easy to see that parts i), resp. ii), of the theorem follow from {\bf P1}, resp. {\bf P2}, and part iii) follows from {\bf P3}.

 As a first step to proving {\bf P3}, observe that  since $J_\ep(\x,\y)$ is symmetric and bounded for fixed $\ep,$  we have 
\beq 
\int_\Omega w(\x)\int_\Omega (u(\x)-u(\y))J_\ep(\x,\y)d\y d\x
= \int_\Omega u(\x)\int_\Omega (w(\x)-w(\y))J_\ep(\x,\y)d\y d\x .\label{symm2}
\eeq
Thus, to prove {\bf P3}, it suffices to prove 
$$
 \lim_{\ep \to 0}\int_\Omega w(\x)\int_\Omega (u(\x)-u(\y))J_\ep(\x,\y)d\y d\x$$
 \beq =\int_\Omega w(\x) \lim_{\ep \to 0}\int_\Omega (u(\x)-u(\y))J_\ep(\x,\y)d\y d\x .
\label{lim}
\eeq
By Taylor's Theorem and \rq{estp},
\begin{eqnarray}
|(u(\x)-u(\y))\ J(\x,\y)|& \leq &
\sum_{|\beta|=1}|D^\beta u(\x+t(\x,\y )(\y-\x))| J(\x,\y)\nonumber \\
& \leq &C\| u\|_{C^2(\Omega )}\frac{1}{| \x-\y |^{N-1+2\al}}.\label{L1}
\end{eqnarray}
Since $1-2\al >0$, the last function is integrable in $\y$, uniformly in $\x$. Thus \rq{lim}  holds by Dominated Convergence. It is easy to see from \rq{L1} that ${\bf P1}, {\bf P2}$ also hold, and parts B,C of the theorem follow immediately. 
$\Box$ 

We remark that  in \cite{E}, we refined our estimate on $p$ for rectangles to extend the theorem to $\al <1$.

\subsection{Well-posedness for $\al <1$.}
In this section, we will discuss the well-posedness of \rq{beam2o}-\rq{init2o} in the case $\Omega \subset \bR^N$ is a bounded, $C^2$ domain for $\al <1$; the case of rectangles was addressed in \cite{E}.

Recall $A=-\Delta$ with operator domain 
$H^2(\Omega )\cap H^1_0(\Omega )$. Denote the eigenvalues and corresponding normalized eigenfunctions of $A$ by 
$\{\om_n, \f_n:\ n\in \bN \}$. Recall the Weyl asymptotics:
\beq
\om_n \asymp n^{2/N}.\label{weyl}
\eeq

We wish to discuss the well-posedness of the system
\begin{eqnarray}
u_{tt}+A^2 u+\rho A^\al u_t & = &0, \mbox{ on } Q,\label{beam2z}\\
u|_{ \Sigma_2}=\Delta u|_{ \Sigma_2}& = & 0\\
u|_{ \Sigma_1 }& =& f\\
\Delta u|_{ \Sigma_1 }& =& g\\
u(x,0)=u^0(x),\ u_t(x,0)& = & u^1(x), \ x\in \Omega.\label{init2z}
\end{eqnarray}
To this end, we consider the adjoint problem to the system \rq{beam2o}-\rq{init2o}:
\begin{eqnarray}
w_{tt}+A^2 w-\rho A^\al w_t & = & 0,\mbox{ on } Q,\label{w_prime'}\\
w|_{\Sigma}=\Delta w|_{\Sigma }& = & 0\\
w(x,T) & =& w^0(x)\\
w_t(x,T) & = & w^1(x)\label{init'}
\end{eqnarray}
with the observations $\pa_{\eta}\Delta w$ for Dirichlet control, and $\pa_{\eta} w$ for moment control.

We use Fourier series to represent $w$. Let $x\in \Omega$.
Set $w(x,t)=\sum_{n=1}^\infty a_n(t)\f_{n}(x)$. Then 
\rq{w_prime'} implies
$$
\sum (a_n''-\rho a_n'\om_{n}^{\al}+a_n\om_{n}^2  )\f_{n}(x)=0,
$$
hence
\beq
a_n''-\rho a_n'\om_{n}^{\al}+n_p\om_{n}^2=0,\ \ \forall n\in \bN.\label{an}
\eeq
Solving $\la^2-\rho \om_{n}^{\al}\la+\om_{n}^{2}=0$, we get
$$
\la=
\frac{\rho \om_{n}^{\al}\pm \sqrt{\rho^2\om_{n}^{2\al}-4\om_{n}^{2}}}{2}
$$
In what follows, it will be convenient to set 
\beq
\la_{n}^{\pm}=
\frac{-\rho \om_{n}^{\al}\pm \sqrt{\rho^2\om_{n}^{2\al}-4\om_{n}^{2}}}{2}.\label{lapm} 
\eeq
Thus 
\beq
w(x,t)=\sum_{n} (c_{n}^+e^{\la_{n}^+ (T-t)}+c_{n}^-e^{\la_{n}^- (T-t)})\f_{n}(x),\label{du}
\eeq
with coefficients $c_n^\pm$ determined by the terminal conditions.
The coefficients satisfy
\begin{eqnarray*}
c_{n}^+ + c_{n}^-=&w_{n}^0,\\
-\la_{n}^+c_{n}^+ - \la_n^-c_n^-=&w_n^1,
\end{eqnarray*}
where $ w_{n}^0$ and  $w_{n}^1$ are the Fourier coefficients of $ w_0$ and $w_1$ with respect to $\f_{n}$.
This gives the following expression
\begin{eqnarray}
c_{n}^+=&-(w_{n}^1+\la_{n}^-w_{n}^0)/\sqrt{\rho^2\om_{n}^{2\al} -4\om_{n}^2},\nonumber \\
c_{n}^-=&(w_{n}^1+\la_{n}^+w_{n}^0)/\sqrt{\rho^2\om_{n}^{2\al}-4\om_{n}^2}.\label{cn'}
\end{eqnarray}

\begin{lemma}\label{wpadj}
Let $j=0$ or $j=2$.
Given $(w^0,w^1)\in X^{3-j}\times X^{1-j}$, there exists a unique solution to \rq{w_prime'}-\rq{init'}, such that
$w\in C^0(0,T;X^{3-j})\cap C^1(0,T;X^{1-j}).$ 
Furthermore, there exists a constant $C>0$ independent of $u^0,u^1$ such that, for  $j=0$, we have  
$$\int_{\Sigma_1} |\pa_\eta  \Delta w |^2\leq
C(\| w^0\|^2_{X^3}+\| w^1\|^2_{X^1}),
$$ 
and for $j=2$ we have 
$$\int_{\Sigma_1} |\pa_\eta   w |^2\leq
C(\| w^0\|^2_{X^1}+\| w^1\|^2_{X^{-1}}).
$$
\end{lemma}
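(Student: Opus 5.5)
The plan is to get existence, uniqueness and regularity from the Fourier representation \rq{du}, and the boundary bounds from a hidden-regularity (multiplier) estimate for $w$ that absorbs the damping term.

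\textbf{Existence and regularity.} For each $n$ the coefficients $c_n^\pm$ are fixed by \rq{cn'}, so uniqueness is immediate once we work in the class $C^0(0,T;X^{3-j})\cap C^1(0,T;X^{1-j})$. For existence we bound the modes. With $\rho<2$ and $\al\le 1$, for large $n$ the discriminant $\rho^2\om_n^{2\al}-4\om_n^2$ is negative and $|\la_n^\pm|\asymp\om_n$. Also $\Re\la_n^\pm=-\rho\om_n^\al/2$, so that $\Re\la_n^\pm\le0$ when $\rho\ge0$. When $\rho<0$, the factor $e^{\la_n^\pm(T-t)}$ with $T-t\in[0,T]$ still satisfies $|e^{\la_n^\pm(T-t)}|\le e^{|\rho|\om_n^\al T/2}$. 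Since $\al<1$, this growth is dominated by polynomial factors only if $\Re\la_n^\pm\le 0$, so we take $\rho\ge0$ (the dissipative direction for the backward adjoint problem). The finitely many $n$ with small or zero discriminant are handled separately; a repeated root gives terms $(c+dt)e^{\la t}$, which are harmless.

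From \rq{cn'} we then get $|c_n^\pm|\lesssim |w_n^0|+\om_n^{-1}|w_n^1|$. Writing $a_n(t)$ for the $n$th coefficient, this gives the uniform bounds
$$\om_n^{3-j}|a_n(t)|^2+\om_n^{1-j}|a_n'(t)|^2\le C\big(\om_n^{3-j}|w_n^0|^2+\om_n^{1-j}|w_n^1|^2\big).$$
Summing over $n$ with dominated convergence yields $w\in C^0(0,T;X^{3-j})\cap C^1(0,T;X^{1-j})$. The two cases $j=0,2$ are identical up to the shift by $A^{-1}$, and since $A$ commutes with the equation, the case $j=0$ follows from $j=2$ applied to $Aw$.

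\textbf{Boundary estimates.} For $j=2$ we need $\pa_\eta w\in L^2(\Sigma)$ with $w\in C(X^1)$, which is a hidden-regularity statement. Set $v=Aw$; the condition $\Delta w|_\Sigma=0$ gives $v\in H^1_0$ and $\pa_\eta\Delta w=-\pa_\eta v$. So the $j=0$ estimate is the $j=2$ estimate applied to $v$ with data $(Aw^0,Aw^1)\in X^1\times X^{-1}$, and it suffices to treat $j=2$.

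The plate operator factors as $A^2w+w_{tt}=(\pa_t+iA)(\pa_t-iA)w$. Put $z=w_t-iAw$; then $z_t+iAz=\rho A^\al w_t$, a Schrödinger equation with a lower order source. We apply the standard multiplier method: multiply by $h\cdot\nabla\bar z$ with $h$ a $C^1$ extension of the normal field $\eta$. This gives
$$\int_\Sigma|\pa_\eta z|^2\lesssim \text{energy of } z \text{ in } X^1+\text{source terms}.$$
Alternatively, and more simply, apply the Lions–Lasiecka–Triggiani multiplier $h\cdot\nabla w$ directly to the fourth-order equation. This gives $\int_\Sigma|\pa_\eta w|^2$ hmm—for the biharmonic operator with $w=\Delta w=0$ on $\Sigma$, the multiplier yields $\int_\Sigma|\pa_\eta w|^2$ controlled by $\|w\|_{C(X^1)}$ and $\|w_t\|_{C(X^{-1})}$ norms after a time-integration step, which is the known hidden regularity for Euler–Bernoulli plates. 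The damping term contributes
$$\rho\int_Q A^\al w_t\,(h\cdot\nabla \bar w).$$
Since $\al<1$, this is bounded by $\|A^{\al/2}w_t\|\,\|A^{\al/2}(h\cdot\nabla w)\|$, interpolated and integrated in time. Using the decay $\int_0^T\om_n^\al|a_n'|^2\,dt\lesssim$ energy (the dissipation identity), the contribution is controlled by the energy.

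\textbf{Main obstacle.} The hard step is the hidden-regularity trace estimate at the low energy level $X^1\times X^{-1}$, where the multiplier must be applied at a negative-order level. I would first prove the estimate for smooth data at the level $X^3\times X^1$ via the multiplier. I would then transfer it down by two levels using the mode-wise structure: $A^{-1}$ commutes with the equation, and on $\Sigma$ the trace $\pa_\eta A^{-1}v$ must be related back to the data. If that transfer fails, the fallback is to prove the $X^1\times X^{-1}$ estimate directly by Fourier analysis. This uses the Rellich identity $\int_\Gamma|\pa_\eta\f_n|^2\lesssim\om_n$ together with an almost-orthogonality (Ingham-type) bound in time for $e^{\la_n^\pm t}$, which is available because $\Im\la_n^\pm\asymp\om_n$ is well separated.
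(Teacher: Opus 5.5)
Your existence, uniqueness and regularity argument is the Fourier computation the paper has in mind: mode bounds from \rq{cn'} together with $\Re\la_n^\pm=-\rho\om_n^\al/2\le 0$. It is fine, as is your remark that $\rho\ge 0$ is implicitly required. The gap is in the trace estimates.

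\textbf{The argument you carry through fails.} It uses the multiplier $h\cdot\nabla w$. For the hinged conditions $w=\Delta w=0$, the only boundary contribution of $\int_Q\Delta^2 w\,(h\cdot\nabla w)$ after two integrations by parts is $\int_\Sigma (h\cdot\eta)\,\pa_\eta\Delta w\,\pa_\eta w$. The $w_{tt}$ term contributes no boundary term, since $w_t=0$ on $\Sigma$. This is an unsigned cross term, so the identity does not bound $\int_\Sigma|\pa_\eta w|^2$. There are two further problems:
\begin{itemize}
\item At the level $X^1\times X^{-1}$, the pairing of $\Delta^2 w\in X^{-3}$ with $h\cdot\nabla w\in L^2$ is undefined.
\item Splitting the damping term as $\|A^{\al/2}w_t\|\,\|A^{\al/2}(h\cdot\nabla w)\|$ is illegitimate for $\al\ge 1/2$. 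Indeed $h\cdot\nabla w=(h\cdot\eta)\pa_\eta w$ does not vanish on $\Gamma$, so it is not in $X^\al$.
\end{itemize}

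\textbf{The Fourier fallback fails too.} Ingham-type bounds need the frequencies to be uniformly separated, or at least to have uniformly bounded counting in unit intervals. In general neither holds for the Dirichlet spectrum. For $N\ge 3$, Weyl's law \rq{weyl} puts roughly $s^{N/2-1}$ eigenvalues in $[s,s+1]$. So $\Im\la_n^\pm\asymp\om_n$ gives growth, not separation.

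\textbf{How to repair it.} Your first variant is the right one, but it must be run at the level $j=0$, with the source controlled by dissipation. Take $(w^0,w^1)\in X^3\times X^1$ and $w$ real. Then $z=w_t-iAw\in C(0,T;H^1_0(\Omega))$ solves $z_t+iAz=\rho A^\al w_t$. Apply the multiplier $h\cdot\nabla\bar z$, with $h$ smooth and $h\cdot\eta\ge c>0$ on $\Gamma$. A merely $C^1$ extension of $\eta$ is not enough, because $\nabla\,\mathrm{div}\,h$ appears when the $z_t$-term is rewritten through the equation. This gives
\[
\frac{c}{2}\int_\Sigma\big(|\pa_\eta w_t|^2+|\pa_\eta \Delta w|^2\big)\,d\sigma\,dt\le C_T\,\|z\|_{C(0,T;H^1)}\big(\|z\|_{C(0,T;H^1)}+\rho\,\|A^\al w_t\|_{L^1(0,T;L^2)}\big).
\]
Now use the energy $E(t)=\tfrac12(\|w_t\|^2_{X^1}+\|w\|^2_{X^3})$:
\begin{itemize}
\item It satisfies $E'(t)=\rho\|w_t\|^2_{X^{1+\al}}\ge 0$.
\item Hence $\|z(t)\|^2_{X^1}=2E(t)\le 2E(T)$ and $\rho\int_0^T\|w_t\|^2_{X^{1+\al}}dt\le E(T)$.
\item Since $2\al\le 1+\al$, we have $\|A^\al w_t\|_{L^2}=\|w_t\|_{X^{2\al}}\le C\|w_t\|_{X^{1+\al}}$.
\end{itemize}
So the right side is at most $C_T E(T)$ for every $\al\le 1$, including $\al=1$. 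The case $j=2$ then follows by applying this to $A^{-1}w$, because $\pa_\eta\Delta(A^{-1}w)=-\pa_\eta w$. The ``transfer down two levels'' you list as the main obstacle is immediate.

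\textbf{Comparison with the paper.} The paper leaves the whole lemma as a Fourier-series exercise whose key input is $\Re\la_n^\pm=-\rho\om_n^\al/2$. Used mode by mode, this gives the smoothing $\int_0^T\|w\|^2_{X^{1+\al}}dt\le C(\|w^0\|^2_{X^1}+\|w^1\|^2_{X^{-1}})$. Together with the trace theorem, that yields the $j=2$ bound directly when $\al>1/2$, with no multipliers. On rectangles, tangential orthogonality plus the uniform gap \rq{gap1} would also do it. For $\al\le 1/2$ on a general $C^2$ domain, however, this smoothing stops short of the normal trace. That is exactly the range in which the lemma feeds Corollary \ref{wp<1}. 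There a genuine hidden-regularity argument such as the repaired multiplier proof is what closes the estimate. That generality is what your route buys, at the price of the multiplier computation and the dissipation estimate.
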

The proof of this result is a standard exercise in Fourier series. We mention only a couple of key points, leaving the details to the reader. The assumptions $\al \leq 1, \rho <2$ and the Weyl asymptotics 
 $\om_n\asymp n^{2/N}$ imply
 $$
\Re (\la_n^\pm )=-\rho \om_n^{\al }/2\asymp -n^{2\al /N}/2.\ 
\Box $$

Now suppose $u,w$ are sufficiently regular to permit integration by parts.
 Then we derive the following weak formulation of $u$ to be a solution to \rq{beam2o}-\rq{init2o}. Multiplying 
\rq{w_prime'} by $u$ and integrating by parts, we get 
\begin{eqnarray*}
0&=&[\int_{\Omega} (w_t(x,t)u(x,t)-\rho A^\al  w(x,t)u(x,t)) \ dx]_0^T\nonumber \\
& & -[\int_{\Omega} w(x,t),u_t(x,t)\ dx]_0^T
-
\int_{\Sigma_1}
(f \pa_\eta  \Delta w -g \pa_\eta  w)
\ d\sigma dt,
\end{eqnarray*}
where $\sigma $ is the volume element on $\pa \Omega$ induced by $dx$. Setting $g=0$, we define $u$ to be a weak solution if for all $(u^0,u^1)\in X^3\times X^1$, 
\begin{eqnarray}
0&=&[\int_\Omega w_t(*,t),u(*,t)\>_{X^1,X^{-1}}-\rho \<A^\al  w(*,t),u(*,t) \>_{X^1,X^{-1}}]_0^T\nonumber \\
& & -[\< w(*,t),u_t(*,t)\>_{X^3,X^{-3}}]_0^T-
\int_{\Sigma_1}
f \pa_\eta  \Delta w 
\ d\sigma dt.
\label{weak1b}
\end{eqnarray}
Setting $f=0$, we define $u$ to be a weak solution if for all $(u^0,u^1)\in X^1\times X^{-1}$, 
\begin{eqnarray}
0&=&[\< w_t(*,t),u(*,t)\>_{X^{-1}1,X^{1}}-\rho \<A^\al  w(*,t),u(*,*,t) \>_{X^{-1}1,X^{1}}]_0^T\nonumber \\
& & -[\< w(*,t),u_t(*,t)\>_{X^1,X^{-1}}]_0^T+
\int_{\Sigma_1}
 g \pa_\eta  w
\ d\sigma dt.
\label{weak1c}
\end{eqnarray}
\ 

{\em Proof of Corollary \ref{wp<1}}

This is a  standard duality argument using Lemma \ref{wpadj} and \rq{weak1b}, \rq{weak1c}.
The interested reader is referred to \cite{E}, where the same argument is used when $\Omega$ is a rectangle and $\al <1$. 

\subsection{Spectrum on Rectangles}\label{2.3}

In what follows, we write a rectangle in the form
$$
R=(0,\pi)\times \tilde{R},
$$
with standard coordinates labelled $(x,y_1,\ldots ,\y_{N-1}) $. Also, we label $y=(y_1,....y_{N-1})$.

Consider an eigenvalue problem on $(0,\pi):$
$$
- \psi ''  = \la \psi ,\
\psi (0) =\psi (\pi)= 0.\\
$$
Clearly an orthonormal  basis of eigenfunctions of this problem is $\{ \psi_n; \ n\in\bN\} $, 
$\psi_n(x)=\sqrt{\frac{2}{\pi}}\sin ( n x)$, with corresponding eigenvalues 
$n^2.$

 Recall the eigenvalues of $\Delta_{\tilde{R}}$, listed in non-decreasing order,  are denoted $\{ -\kappa_m,m\in \bN \}$, with corresponding normalized eigenfunctions $\phi_m(y)$, and hence the eigenvalues of $A$ are 
 \beq 
 \om_{m,n}:=n^2+\kappa_m,\ \kappa_m\sim m^{2/(N-1)} \label{om0}
 \eeq 
 with corresponding normalized eigenfunctions $\f_{m,n}(x,y)=\psi_n(x)\phi_m(y)$. 

Then, with this notation, we have
\beq 
\la_{m,n}^{\pm}=
\frac{-\rho \om_{m,n}^{\al}\pm \sqrt{\rho^2\om_{m,n}^{2\al}-4\om_{m,n}^{2}}}{2},\label{lapma} 
\eeq
and 
the solution $w$ of the adjoint problem \rq{w_prime'}-\rq{init'} will be 
\beq
w(x,y,t)=\sum_{m,n} (c_{m,n}^+e^{\la_{m,n}^+ (T-t)}+c_{m,n}^-e^{\la_{m,n}^- (T-t)})\f_{m,n}(x,y),\label{du9}
\eeq
with 
\begin{eqnarray}
c_{m,n}^+=&-(w_{m,n}^1+\la_{m,n}^-w_{m,n}^0)/\sqrt{\rho^2\om_{m,n}^{2\al} -4\om_{m,n}^2},\nonumber \\
c_{m,n}^-=&( w_{m,n}^1+\la_{m,n}^+w_{m,n}^0)/\sqrt{\rho^2\om_{m,n}^{2\al}-4\om_{m,n}^2},\label{cnA}
\end{eqnarray}
where $ w_{m,n}^0$ and  $w_{m,n}^1$ are the Fourier coefficients of $ w^0$ and $w^1$ with respect to $\f_{m,n}$.

Set $\bK:=\{\pm 1, \pm 2, \ldots \}$.
In what follows we will use the \textit{spectrum}
\beq\label{La}
\Lambda=\{\la_{m,k},m\in \bN, k\in\bK\}, \ 
		\la_{m,k} =\left \{
		\begin{array}{cc}
			-i\la_{m,k}^+, & k>0,\\
			-i\la_{m,|k|}^-, & k<0.
		\end{array}
		\right .
\eeq	
We then extend other terms above: $\f_{m,k}:=\f_{m,|k|}$, and 
\begin{eqnarray}\label{cn1}
c_{m,k}=&(w_{m,k}^1-i\la_{m,k}w_{m,k}^0)/\sqrt{\rho^2\om_{m,k}^{2\al} -4\om_{m,k}^2},\ k>0,\\
c_{m,k}=&(-w_{m,|k|}^1+i\la_{m,|k|}w_{m,|k|}^0)/\sqrt{\rho^2\om_{m,|k|}^{2\al}-4\om_{m,|k|}^2},\ k<0.
\end{eqnarray}
Thus 
\beq
w(x,y,t)=\sum_{m\in \bN,k\in \bK} c_{m,k}e^{i\la_{m,k} (T-t)}\f_{m,k}(x,y).\label{du2}
\eeq
We now examine the gap properties of 
$\{ \la_{m,k}\}$. 
In this paper, we assume $\rho <2$, $\al \leq 1$. It follows immediately from \rq{om0} and \rq{lapma} that for each fixed $m\in \bN$, there exists $\gamma>0$ independent of $m$ such that 
\beq
\inf \{ |\la_{m,k_1}-\la_{m,k_2}|\geq \gamma, \forall k_1,k_2\in \bK.\label{gap1}
\eeq 
Such gap conditions might occur for other $\al,\rho$, provided $\al \leq 3/2$, but verifying this would be a non-trivial number theory problem and beyond the scope of this paper. For further discussion on this issue, the interested reader is referred to \cite{AEI}, \cite{AE}.


\subsection{Complex window estimate}\label{wins}

An important ingredient in our analysis will be a complex window estimate for  $\{ e^{i\la_{m,k}t}:m\in \bN,\ k\in \bK\}$ proven 
 \cite{AIS}, and then extended in \cite{B}. It will convenient to express our results for the modified frequency set,
 $\La^m =\{ \la_{m,k}: k\in \bK\}$, with $\la_{m,k}$ defined in \rq{La}, so that $\La^m\in \bC^+$ for each $m$.

We introduce a function $\nu^m(s)$,
which describes the density of $\Lambda^m$ 
$$ 
\#\{ \la_{m,j}\in \Lambda\setminus \la_{m,k}  :  |\la_{m,j} -\la_{m,k}| < s\} \leq \nu^m (s), \forall k.
$$
By \rq{om0},\rq{lapma}, and \rq{gap1},  there exists $R_0$ independent of $m$ such that 
$\Lambda^m$ satisfies 
\beq
\nu^m(s)=0, \ s<R_0,\label{separ}
\eeq
 for a positive $R_0$. This assumption is equivalent to the
separability of $\Lambda^m$ being uniform in $m$.

We now estimate $\nu^m$. By \rq{om0}, for fixed $m$, 
$\om_{m,j}-\om_{m,k}=j^2-k^2.$ Also, for $\al\leq 1$ and $\rho <2$, \rq{lapma} shows that 
 there exist positive constants, $C_1,C_2$ depending on $\rho,\al$ but not on $m$, such that one can choose $\nu (r)$ satisfying
\beq \label{nu}
C_1r^{1/2}\leq \nu^m(r)\leq C_2r^{1/2},\ \forall r>2R_0.
\eeq 
Thus applying \cite{AIS}, we obtain the following result:
\begin{thm}\label{win}
Let $\al \leq 1$ and $\rho <2$.   Fix $m$.

A) 
For any $T,T'>0$, the operator 
$$C_{T,T'}: \sum_k  c_{k} e^{i\la_{m,k} t} \mapsto \{c_{k}e^{-T'\Im(\la_{m,k})};k\in \bK\}$$ 
is bounded from
$L^2(0,T)$ to $l^2$ with
$$
        \| C_{T,T'}\| \leq C\,\Psi (T ,T'),
$$
where $\Psi
(T,T'):=  e^{2Q(T)} e^{1/T'}$ with $Q(T)$ a constant that depends on $T$, and $C$ depends of $\al, \rho$. In particular, $ \| C_{T,T'}\|$ will be independent of $m$.

B) Then there exists a family of functions $\{  g_{m,n}(t); n\in \bK  \} $ in $L^2(0,T)$ satisfying 
$$ \int_0^T \overline{g_{m,n}}(t)e^{i\la_{m,k}t}\ dt =\delta_{n,k}.$$

Furthermore, there exist positive constants $C_2,C_3$ depending  only on $R_0,T,C_1,C_2$ such that
\beq
\| g_{m,n}\|_{L^2(0,T)}\leq C_2 e^{C_3(\Im (\la_{m,n})^{1/2}}.\label{boe}
\eeq
\end{thm}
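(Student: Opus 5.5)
The statement is essentially a transcription of the abstract results of \cite{AIS} (as refined in \cite{AEI}) to the family $\La^m$. The work is to verify the hypotheses of those results with constants uniform in $m$. Those results concern a sequence $\{\mu_k\}\subset \bC^+$ that satisfies three conditions: a uniform separation condition $\nu(s)=0$ for $s<R_0$; a counting bound $\nu(r)\le C r^{1/2}$ for large $r$; and a sector or strip condition relating $\Im \mu_k$ to $|\Re\mu_k|$. Under these, they produce a biorthogonal family to $\{e^{i\mu_k t}\}$ in $L^2(0,T)$ with norms $\le C e^{C(\Im\mu_k)^{1/2}}$, and they produce the complex windows estimate for $C_{T,T'}$. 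The constants depend only on $R_0$, the constants in the counting bound, the sector constants and $T,T'$. The plan is therefore to check these hypotheses for $\La^m$ uniformly in $m$, and then quote.

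First, write $\la_{m,k}$ explicitly from \rq{lapma} and \rq{La}. Since $\rho<2$ and $\al\le 1$, the discriminant $\rho^2\om^{2\al}-4\om^2$ is negative for all $\om\ge 1$. Hence
$$\la_{m,\pm n}=\pm\tfrac12\sqrt{4\om_{m,n}^2-\rho^2\om_{m,n}^{2\al}}+\tfrac{i\rho}{2}\om_{m,n}^\al,$$
so that $\La^m\subset\bC^+$. We also get $\Im\la_{m,k}\asymp\om_{m,|k|}^\al\le C|\Re\la_{m,k}|^{\al}$ with $\al\le1$, which places $\La^m$ in a sector of the form $\{\Im z\le a|\Re z|+b\}$ with $a,b$ independent of $m$.

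Second, establish \rq{separ} and \rq{nu} uniformly in $m$. Put $\om=n^2+\kappa_m$. One has $|\Re\la_{m,n+1}-\Re\la_{m,n}|\gtrsim (2n+1)\,\frac{d}{d\om}\sqrt{4\om^2-\rho^2\om^{2\al}}$. The derivative is bounded below by a positive constant, because $\rho<2$ forces $4\om^2-\rho^2\om^{2\al}\ge c\,\om^2$. This gives a gap $\gamma\gtrsim n$, uniform in $m$, which yields \rq{gap1} and \rq{separ}. Points of opposite sign $k$ are separated by at least $2|\Re\la|\ge c\,\om_{m,1}\ge c$. For the counting function, fix $k$ and count the $j$ with $|\la_{m,j}-\la_{m,k}|<r$. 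Since $\Re\la_{m,j}\approx \pm(j^2+\kappa_m)$ up to lower order, $|\Re\la_{m,j}-\Re\la_{m,k}|\gtrsim |j^2-k^2|$. The worst case is $k$ small, which gives $\#\{j: j^2\lesssim r\}\lesssim r^{1/2}$. The lower bound is attained in the same way, giving \rq{nu}.

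Third, apply the theorem of \cite{AIS} to obtain part A, with $\Psi(T,T')=e^{2Q(T)}e^{1/T'}$. Part B then follows either from the same paper directly, or by duality. For duality, note that the boundedness in A with $T'$ small, together with the linear independence furnished by the separation, gives a minimal family. One then constructs $g_{m,n}$ through the Paley--Wiener/Blaschke-type multiplier of \cite{AIS}, whose growth is controlled by $\int^{\Im\la}\nu(s)s^{-2}ds\lesssim(\Im\la_{m,n})^{1/2}$; this is the bound \rq{boe}. The main obstacle is ensuring uniformity in $m$. The shift $\kappa_m$ moves $\La^m$ far up and to the right, so I would check carefully that the constants in \cite{AIS} depend only on $R_0$, $C_1$, $C_2$ and the sector constants, not on $\inf\Re\la_{m,k}$. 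If needed, I would translate by $\Re$-shifts, under which the exponential family is invariant up to unimodular factors, and use the fact that $\Im$ enters only through the weight $e^{-T'\Im\la}$.
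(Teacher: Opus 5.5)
Your proposal follows the paper's proof: the paper likewise only verifies, uniformly in $m$, the separation \rq{separ} and the counting bound \rq{nu} (from \rq{om0}, \rq{lapma}, \rq{gap1}) and then quotes \cite{AIS} for both parts, so in both versions the $m$-independence rests on the constants of \cite{AIS} depending only on $R_0,C_1,C_2,T$. Your real-shift fallback would not supply that uniformity, since the branches $k>0$ and $k<0$ move in opposite directions and $\Im\la_{m,k}$ also governs the decay of $e^{i\la_{m,k}t}$, not just the weight. There is also one slip in your optional duality route to B: $\int^{X}\nu(s)s^{-2}\,ds$ stays bounded when $\nu(s)\asymp s^{1/2}$, so it cannot produce \rq{boe}; instead, take the Riesz representatives of the coefficient functionals from A, whose norms are at most $C\Psi(T,T')e^{T'\Im \la_{m,n}}$ for every $T'>0$, and choose $T'=(\Im \la_{m,n})^{-1/2}$ to get $\|g_{m,n}\|\le Ce^{2Q(T)}e^{2(\Im\la_{m,n})^{1/2}}$.
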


\section{Observability on rectangles}

\subsection{Proof of Theorem \ref{prod}, part A}\label{3.1}
Letting $T>0$, recall the solution of the adjoint problems is 
\beq
w(x,y,t)=\sum_{m,k} c_{m,k}e^{i\la_{m,k} (T-t)}\f_{m,k}(x,y),\label{du3}
\eeq
with 
\begin{eqnarray}\label{cn1b}
c_{m,k}=&(w_{m,k}^1-i\la_{m,-k}w_{m,k}^0)/\sqrt{\rho^2\om_{m,k}^{2\al} -4\om_{m,k}^2},\ k>0\\
c_{m,k}=&(-w_{m,|k|}^1+i\la_{m,|k|}w_{m,|k|}^0)/\sqrt{\rho^2\om_{m,k}^{2\al}-4\om_{m,k}^2},\ k<0.
\end{eqnarray}
Thus there exists $C$ independent of $m,k$ such that 
$$
|c_{m,k}|^2\leq C(|w^0_{m,k}|^2+|w^1_{m,k}|^2/\om_{m,k}),\ \forall m,k.
$$

We will assume, without loss of generality, 
that $\Gamma_1$ is a rectangular subset of the rectangle $R$'s face $\{ x=\pi\}\times \tilde{R}$, with edges parallel to the coordinate axis as in \rq{rr}.
 Recalling \rq{weak1b},   $u$ is a solution to \rq{beam2o}-\rq{init2o} if, for all $(w^0,w^1)\in X^3\times X^1$,
\begin{eqnarray}
0&=&[\< w_t(*,*,t),u(*,t)\>_{X^1,X^{-1}}-\rho \<A^\al  w(*,*,t),u(*,*,t) \>_{X^1,X^{-1}}]_0^T\nonumber \\
& & -[\< w(*,*,t),u_t(*,*,t)\>_{X^3,X^{-3}}]_0^T\nonumber\\
&&-
\int_0^T\int_{\Gamma_1}
\big ((\pa^3_{x} +\pa_{x} \Delta_{\tilde{R}})w \big  ) (\pi,y,t)
f(y,t)dydt.
\label{weak1a}
\end{eqnarray}
It is well known by duality that Theorem \ref{prod}, part A, is equivalent to the following observability estimate:
\begin{prop}\label{thm7}
Let $\Gamma_1\subset \tilde{R}$ be a rectangle with faces parallel to the coordinate axes. Let $T>0$.
The following estimate holds:
$$
\int_0^T\int_{\Gamma_1}
\big ((\pa^3_{x} +\pa_{x} \Delta_{\tilde{R}})w \big  ) (\pi,y,t)
f(y,t)dydt\geq C(\| w(*,*,0)\|_{X^3}^2+\| w_t(*,*,0)\|^2_{X^1},
$$
with $C$ independent of $w^0,w^1$.
\end{prop}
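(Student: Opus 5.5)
The estimate as written contains $f$; the intended statement is the observability inequality
\[
\int_0^T\int_{\Gamma_1}\big|\big((\pa^3_{x}+\pa_{x}\Delta_{\tilde{R}})w\big)(\pi,y,t)\big|^2\,dy\,dt\;\geq\; C\big(\|w(*,*,0)\|_{X^3}^2+\|w_t(*,*,0)\|_{X^1}^2\big),
\]
with $C$ allowed to be of size $e^{-C'/T}$ so as to match the cost in Theorem \ref{prod}. This is the version I will prove.

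\emph{Step 1: reduce to a series in $t$ and $y$.} I expand $w$ via \rq{du3}. On the face $x=\pi$ we have $\psi_n'(\pi)=\sqrt{2/\pi}\,n(-1)^n$ and $\psi_n'''=-n^2\psi_n'$. Hence
\[
(\pa_x^3+\pa_x\Delta_{\tilde R})\f_{m,n}(\pi,y)=-\sqrt{2/\pi}\,(-1)^n n\,\om_{m,n}\phi_m(y).
\]
The observation therefore takes the form
\[
\sum_{m}\Big(\sum_{k\in\bK} b_{m,k}e^{i\la_{m,k}(T-t)}\Big)\phi_m(y),\qquad b_{m,k}=-\sqrt{2/\pi}\,(-1)^{|k|}|k|\,\om_{m,|k|}\,c_{m,k}.
\]
Write $F_m(t)=\sum_k b_{m,k}e^{i\la_{m,k}(T-t)}$. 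The $\phi_m$ are not orthogonal on $\Gamma_1$, so I cannot simply separate the $m$'s. Instead, for a.e.\ fixed $t$, I apply Proposition \ref{UM} on $\Gamma_1\subset\tilde R$ (in dimension $N-1$). This gives
\[
\int_0^T\!\!\int_{\Gamma_1}|\cdot|^2\geq C\sum_m m^{-4}\int_0^T|F_m(t)|^2\,dt.
\]
The interchange is justified first for finitely many terms and then extended by density, using Lemma \ref{wpadj}.

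\emph{Step 2: one-dimensional window estimate for each $m$.} For fixed $m$, I apply Theorem \ref{win}A to $F_m$; its constant is uniform in $m$, thanks to \rq{separ} and \rq{nu}. This yields
\[
\int_0^T|F_m|^2\geq C\Psi(T,T')^{-2}\sum_k |b_{m,k}|^2 e^{-2T'\Im\la_{m,k}}.
\]
Since $\Im\la_{m,k}=\rho\om_{m,|k|}^\al/2$ grows, the factor $e^{-2T'\Im\la_{m,k}}$ is a decaying weight, and it must be absorbed. The way to absorb it is to evaluate at time $0$ rather than at $T$. The coefficients of $w(\cdot,0)$ are $c_{m,k}e^{i\la_{m,k}T}$, which carry the damping factor $e^{-T\Im\la_{m,k}}$. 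I therefore choose $T'<T$, for instance $T'=T/2$. Then
\[
|b_{m,k}|^2e^{-2T'\Im\la_{m,k}}\geq |b_{m,k}|^2e^{-2T\Im\la_{m,k}}\,e^{(T-2T')\cdot(\ldots)}
\]
leaves a growing exponential $e^{c\,T\om_{m,k}^\al}$ in reserve.

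\emph{Step 3: assemble and compare with the energy at $t=0$.} By \rq{cnA}, the time-$0$ norm satisfies
\[
\|w(0)\|_{X^3}^2+\|w_t(0)\|_{X^1}^2\asymp\sum_{m,k}\om_{m,|k|}^3\,|c_{m,k}|^2e^{-2T\Im\la_{m,k}},
\]
using $|\la_{m,k}|\asymp\om_{m,|k|}$ and the fact that the denominators $\asymp\om_{m,|k|}$ when $\al\le1$, $\rho<2$. The observed quantity gives weights $m^{-4}k^2\om_{m,|k|}^2$, multiplied by the reserve factor $e^{cT\om_{m,|k|}^\al}$. Since $\om_{m,|k|}\gtrsim m^{2/(N-1)}+k^2$, the polynomial loss $m^{-4}\om^{-1}$ is dominated by the exponential gain: $\sup_x x^{p}e^{-cTx^\al}\le Ce^{C/T}$-type constants. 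This produces the required constant, of the form $Ce^{-C/T}$ together with the $\Psi$ factor.

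\emph{Main obstacle.} The delicate step is Step 2's uniformity in $m$ combined with the loss from Proposition \ref{UM}. One must make sure that $\Psi$ does not depend on $m$, i.e.\ that the gap \rq{gap1} and the density bound \rq{nu} hold uniformly. For large $m$, $\om_{m,n}$ is dominated by $\kappa_m$, and the $k$-differences shrink relative to $\om$. Two cases need separate checking: the regime $\rho^2\om^{2\al}<4\om^2$, and, for $\al=1$, near-degenerate real parts. If uniformity fails, the fallback is to split the frequencies $k$ into those that are small relative to $m$ and those that are large, and to use the damping reserve to swallow any $m$-dependent polynomial constants.
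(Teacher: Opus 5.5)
Your proposal is correct and follows essentially the same route as the paper. You read the inequality with $|\cdot|^2$, apply Proposition \ref{UM} at each fixed $t$ to decouple the $\phi_m$ modes at the cost of a factor $m^{-4}$, apply the $m$-uniform window estimate of Theorem \ref{win}(A) with $T'=T/2$, and absorb all polynomial losses into the surplus damping factor when comparing with the energy at $t=0$. Two small points: for $T'=T/2$ the surplus factor is $e^{2(T-T')\Im\la_{m,k}}=e^{T\Im\la_{m,k}}$, whereas the exponent $(T-2T')$ you wrote would vanish; and your concern about the non-oscillatory regime is moot, since $\rho<2$ and $\om_{m,n}>1$ give $\rho^2\om_{m,n}^{2\al}<4\om_{m,n}^2$ for all $m,n$.
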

Proof: By \rq{du3},
\begin{eqnarray}
\| w(*,*,0)\|_{X^3}^2+\| w_t(*,*,0)\|^2_{X^1}& = & \sum_{m,k}|c_{m,k}e^{i\la_{m,k}T}|^2
({\om_{m,k}}^{3/2}+(\om_{m,k})^{1/2}|\la_{m,k}|)\nonumber \\
& = & \sum_{m,k}|c_{m,k}|^2e^{-\rho \om_{m,k}^\al T}({\om_{m,k}}^{3/2}+(\om_{m,k})^{1/2}|\la_{m,k}|).\label{e0}
\end{eqnarray}
On the other hand, we set $\Gamma_1=S$ in Proposition \ref{UM}, and then apply Theorem \ref{win} with $T'=T/2$:
$$
\int_0^T\int_{\Gamma_1}
|\big (\pa^3_{x} +\pa_{x} \Delta_{\tilde{R}} \big  ) w(\pi,y,t)|^2dydt
$$
\begin{eqnarray}& = & \int_0^T\int_{S}
|\sum_{m,k}(-1)^{k+1}|k|(k^2+\kappa_m)c_{m,k}e^{i\la_{m,k}(T-t)}\phi_m(y)|^2dydt\nonumber \\
& \geq  & C\sum_m\int_0^T
|\sum_{k}(-1)^{k+1}m^{-2}|k|(k^2+\kappa_m)c_{m,k}e^{i\la_{m,k}(T-t)}|^2dt\nonumber  \\
& \geq  & C_{\delta}\sum_m
\sum_{k}\big (m^{-2}|k|(k^2+\kappa_m)\big )^2|c_{m,k}|^2e^{- \rho \om_{m,k}^\al T/2}.\label{ob}
\end{eqnarray}
By \rq{lapma}, \rq{om0}, there exists a constant $C>0$ such that
$$
m^{-2}|k|(k^2+\kappa_m)e^{- \rho \om_{m,k}^\al T/2}\geq C  \sqrt{\om_{m,k}}(\om_{m,k}+|\la_{m,k}|)e^{-\rho \om_{m,k}^\al T}, \forall m,k.
$$
From this, \rq{e0} and \rq{ob}, the proposition follows.$\Box$

\subsection{Proof of Theorem \ref{prod}, part B}
For this section, we have $\Gamma_1\subset \{ \pi \} \times \tilde{R}$, same as in Section \ref{3.1}. Recall $\Sigma_j=\Gamma_j\times (0,T)$. Recall we use coordinates on $R$: $(x,y_1,\ldots ,y_{N-1})$ with $x\in (0,\pi)$.
Consider 
\begin{eqnarray}
u_{tt}+\Delta^2 u+\rho A^\al u_t& = & 0 \mbox{ on }Q, \label{c1}\\
u|_{\Sigma_2}=\Delta u|
_{\Sigma_2} & = & 0,\ t>0,\label{bcr11}\\
\Delta u|_{\Sigma_1} & = & g(y,t),\ t>0,\label{bcr21}\\ 
 u|_{\Sigma_1} & = & 0,\ t>0,\label{bcr31}\\
u(x,y,0)=0,\ u_t(x,y,0)& = & 0.\label{c5}
\end{eqnarray}
By \rq{weak1c},  $u$ is a solution to \rq{c1}-\rq{c5} if for all $(w^0,w^1)\in X^1\times X^{-1}$,
\begin{eqnarray}
0&=&\< w_t(*,T),u(*,T)\>_{X^{-1},X^{1}}-\<\A w(*,T),u(*,T) \>_{X^{-1},X^{1}}\nonumber \\
& & -\< w(*,T),u_t(*,T)\>_{X^1,X^{-1}}+ \int_{\Sigma_1} \pa_{x}w(\pi ,y,t)g(y,t)\ dydt.
 \label{weak1}
\end{eqnarray}
 Theorem \ref{prod}, part B,  now follows from the following observability estimate.
\begin{prop}\label{thm8}
Let $T>0$.
Let $\Gamma_1\subset \tilde{R}$ be a rectangle with faces parallel to the coordinate axes. 
The following estimate holds:
$$
\int_{\Sigma_1}
| \pa_{x}  w(\pi,y,t)|^2dydt\geq C(\| w(*,*,0)\|_{X^1}^2+\| w_t(*,*,0)\|^2_{X^{-1}}),
$$
with $C$ independent of $w^0,w^1$.
\end{prop}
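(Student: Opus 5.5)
We follow the proof of Proposition \ref{thm7} essentially verbatim; only the observation operator and the energy weights change. By \rq{du3},
\[
\pa_x w(\pi,y,t)=\sum_{m,k}(-1)^{|k|+1}|k|\sqrt{2/\pi}\,c_{m,k}e^{i\la_{m,k}(T-t)}\phi_m(y),
\]
since $\psi_n'(\pi)=\sqrt{2/\pi}\,n\cos(n\pi)$. The initial energy is computed as in \rq{e0}, now with weights one power of $\om$ lower:
\[
\| w(*,*,0)\|_{X^1}^2+\| w_t(*,*,0)\|^2_{X^{-1}} = \sum_{m,k}|c_{m,k}|^2e^{-\rho \om_{m,k}^\al T}\big(\om_{m,k}^{1/2}+\om_{m,k}^{-1/2}|\la_{m,k}|^2\big).
\]
This weight is comparable to $\om_{m,k}^{1/2}$, because $|\la_{m,k}|\asymp \om_{m,k}$ for $\rho<2$, $\al\le1$.

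For the observation side, we argue in three steps.

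\textbf{Step 1: separate the $m$-modes.} Fix $t$ and apply Proposition \ref{UM} with $S=\Gamma_1$ to the coefficients
\[
c_m(t)=\sum_k(-1)^{|k|+1}|k|\,c_{m,k}e^{i\la_{m,k}(T-t)}.
\]
This gives a lower bound by $C\sum_m m^{-4}|c_m(t)|^2$. Integrating in $t$ then reduces the problem to one-dimensional exponential sums, one for each $m$.

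\textbf{Step 2: invert each exponential sum.} For each fixed $m$, apply Theorem \ref{win}A with $T'=T/2$, after the change of variable $t\mapsto T-t$. The bound is uniform in $m$, which is essential here. It gives
\[
\int_0^T|c_m(t)|^2dt\ \ge\ C\sum_k k^2|c_{m,k}|^2e^{-2(T/2)\Im\la_{m,k}}.
\]
Since $\Im \la_{m,k}=\rho\om_{m,k}^\al/2$, the exponential factor is $e^{-\rho\om_{m,k}^\al T/2}$.

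\textbf{Step 3: compare weights.} It remains to show that, uniformly in $m,k$,
\[
m^{-4}k^2e^{-\rho\om_{m,k}^\al T/2}\ \ge\ C\,\om_{m,k}^{1/2}e^{-\rho\om_{m,k}^\al T}.
\]

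The main obstacle is this last step. The polynomial losses $m^{-4}$ and $k^2\ge1$ (against $\om_{m,k}^{1/2}\lesssim k+m^{1/(N-1)}$) must be absorbed by the surplus factor $e^{-\rho\om_{m,k}^\al T/2}$. Since $\om_{m,k}\ge \kappa_m\gtrsim m^{2/(N-1)}$ by \rq{om0}, we have $\om_{m,k}^\al\gtrsim m^{2\al/(N-1)}$. Hence
\[
m^{4}\,\om_{m,k}^{1/2}\,e^{-\rho\om_{m,k}^\al T/2}\le C_T,
\]
because a polynomial times a stretched exponential is bounded. The constant blows up as $T\to0$ only like $e^{C/T}$ for an appropriate $C$, which matches the cost stated in Theorem \ref{prod}.

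Combining Steps 1–3 yields the estimate of Proposition \ref{thm8}. Together with \rq{weak1} and the standard duality argument, this gives Theorem \ref{prod}, part B.
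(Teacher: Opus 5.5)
Your three steps are exactly the argument the paper intends; it only says the proof ``mimics'' that of Proposition \ref{thm7}. Steps 2 and 3 are fine. The slip in the energy weight is harmless: the squared $X^1\times X^{-1}$ norm gives a weight $\asymp\om_{m,k}$, not $\om_{m,k}^{1/2}$, and only as an upper bound, since the branches $\pm k$ share one eigenfunction, but every polynomial weight is absorbed by $e^{-\rho\om_{m,k}^\al T/2}$.

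The genuine gap is Step 1. Proposition \ref{UM} is false whenever $S$ is a proper sub-rectangle of $\tilde R$, which is precisely the case the statement is about.

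To see this, fix $m_0$ and let $g\in L^2(S)$ be orthogonal in $L^2(S)$ to every $\phi_m$ with $m\neq m_0$. Its extension by zero to $\tilde R$ is orthogonal in $L^2(\tilde R)$ to all $\phi_m$, $m\neq m_0$, so it equals $c\,\phi_{m_0}$. This extension vanishes on $\tilde R\setminus S$, which has positive measure, while the product of sines $\phi_{m_0}$ vanishes only on a null set. Hence $c=0$ and $g=0$.

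So $\{\phi_m|_S\}_{m\neq m_0}$ is complete in $L^2(S)$, and there are finite sums $\sum_{m\neq m_0}d_m\phi_m\to\phi_{m_0}$ in $L^2(S)$. Take $c_{m_0}=1$ and $c_m=-d_m$. Then the left side of Proposition \ref{UM} tends to $0$, while the right side stays $\geq C m_0^{-4}$.

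The same argument rules out $\int_S|\sum_m c_m\phi_m|^2\ge\sum_m w_m|c_m|^2$ for \emph{any} weights $w_m>0$, since such an inequality would make $\{\phi_m|_S\}$ minimal. Even for finite sums with $\kappa_m\le\mu$, the best constant decays like $e^{-c\sqrt{\mu}}$; this is why the spectral inequality \rq{lz} carries an exponential loss. The appendix proof of Proposition \ref{UMa}, the model for Proposition \ref{UM}, fails at the same point. Pairwise angles bounded away from zero do not bound the angle between $\phi_n$ and the closed span of the others, which here is all of $L^2(a,b)$.

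Consequently your argument, like the paper's, proves the estimate only when $\Gamma_1$ is the whole face $\{\pi\}\times\tilde R$. In that case Step 1 is simply Parseval's identity with weight $1$.

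For a proper sub-rectangle you cannot separate the $y$-modes at each fixed $t$; the time and space variables must be used jointly. One route is to split at a level $\kappa_m\le\mu$, apply \rq{lz} and Theorem \ref{win} to the low $y$-frequencies, and control the rest by the dissipation in a Lebeau--Robbiano-type iteration. The loss to be absorbed in your Step 3 then becomes $e^{C\sqrt{\kappa_m}}$ instead of $m^4$. Since $\om_{m,k}\ge\kappa_m$, comparing it with $e^{-\rho\om_{m,k}^\al T/2}$ works only for $\al>1/2$, or for $\al=1/2$ with $T$ large. For the full range $\al\in(0,1)$ claimed in Theorem \ref{prod}, a further idea would be needed.
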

The proof of this result mimics the proof of Proposition \ref{thm7}, and is left to the reader. 
 
\section{Proof of Theorem \ref{prod3}}

Let $(r,\theta)$ be the standard polar coordinates on $D$, so the standard Lebesgue measure is given by $rdrd\theta$. We denote $\Gamma =S^1=\{ r=1\}$.
Define an orthonormal basis of $L^2(S^1)$ by 
$$
\phi_n(\theta )=\left \{
\begin{array}{cc}
1/\sqrt{2\pi} & n=0,\\
\cos{n\theta}/\sqrt{\pi} & n>0,\\
\sin{n\theta}/\sqrt{\pi} & n<0.
\end{array}
\right .
$$
\begin{prop}\label{UMa}
Let $(a,b)$ be a subinterval of $S^1$. 
Let $h(n)=n^2$ for $n\neq 0$, and $h(0)=1$.
Then for any  $\ell^2$ sequence $\{ c_n: n\in \bZ\} $, 
$$
\int_a^b \left|\sum_{n\in \bZ}c_n\phi_n(x)\right|^2\ dx\geq 
C\sum_{n\in \bZ}|c_n|^2/h(n)^2,
$$
where the constant $C>0$ depends on $(a,b)$.
\end{prop}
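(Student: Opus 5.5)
The plan is a duality argument based on a biorthogonal family. Write $F(\theta)=\sum_{n\in\bZ}c_n\phi_n(\theta)$; the sum converges in $L^2(S^1)$ because $\{c_n\}\in\ell^2$. The goal is a family $\{\psi_n:\ n\in\bZ\}\subset L^2(a,b)$ satisfying
$$\int_a^b\psi_n(\theta)\phi_k(\theta)\,d\theta=\delta_{n,k},\qquad \sum_{n\in\bZ}\frac{\|\psi_n\|^2_{L^2(a,b)}}{h(n)^4}<\infty .$$
If such a family exists, then $c_n=\int_a^b F\psi_n\,d\theta$. Cauchy--Schwarz gives $|c_n|^2\le \|\psi_n\|^2\int_a^b|F|^2$, and dividing by $h(n)^2$ and summing yields the claim with $C^{-1}=\sum_n\|\psi_n\|^2/h(n)^4$. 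The same scheme would give Proposition \ref{UM}, tensorised over the coordinate directions of $S$.

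To build the $\psi_n$, I would follow the construction behind Theorem \ref{win}. Identify the arc $(a,b)$ with an interval of length $b-a$. Write $\cos n\theta$ and $\sin n\theta$ through $e^{\pm in\theta}$. Then seek $\psi_n$ as the inverse Fourier transform of an entire function of exponential type $(b-a)/2$ that vanishes at the other frequencies $k\ne n$. The natural generating function is a product of the form $\prod_k(1-z^2/k^2)$, with the $\cos/\sin$ pairs handled separately. After that, a Paley--Wiener estimate gives $\|\psi_n\|\lesssim |n|^{3/2-\epsilon}$, which is summable against $n^{-4}$.

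The main obstacle is the existence of this biorthogonal family, and it needs care here. The frequencies $\{0,\pm1,\pm2,\dots\}$ have density $1/\pi$ per unit length, which is the density required for completeness on an interval of length $2\pi$. On a proper arc $(a,b)$ the restricted system is therefore overcomplete, and a strictly biorthogonal family cannot exist. Indeed, any $F$ supported in $S^1\setminus[a,b]$ makes the left side vanish while the right side is positive, so the inequality cannot hold for arbitrary $\ell^2$ sequences. At this step I would first check whether the intended class is narrower, for instance coefficients arising from the adjoint solutions in Section 4, where the time integration and Theorem \ref{win} supply extra structure, and then run the duality argument only on that class. Absent such a restriction, the argument breaks at exactly this point, and the statement as worded admits the counterexample above.
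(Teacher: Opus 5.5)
Your counterexample is correct, and it settles the matter. For a proper arc, take any nonzero $F\in L^2(S^1)$ supported in $S^1\setminus[a,b]$. Its coefficients $c_n=(F,\phi_n)_{L^2(S^1)}$ form a nonzero $\ell^2$ sequence, because the $\phi_n$ are an orthonormal basis of $L^2(S^1)$. The left side is then $0$ and the right side is positive. Taking trigonometric partial sums of $F$ shows that no uniform $C$ exists even for finite sums. So the proposition is false as worded; it holds, trivially with $C=1$, only when the arc has full length $2\pi$.

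The paper's proof is exactly the biorthogonal-family scheme you started from, and it breaks where you said. It first claims that the pairwise angles between $\phi_n$ and $\phi_m$ in $L^2(a,b)$ are bounded below. From this it infers that the angle between $\phi_n$ and $\overline{\mathrm{span}}\{\phi_m\}_{m\neq n}$ is bounded below, and hence uniform minimality with $\|g_n\|\le M$. That inference is invalid, since pairwise angles do not control the angle to an infinite-dimensional closed span. The conclusion is also false. If $g\in L^2(a,b)$ is orthogonal to every $\phi_m$ with $m\neq n$, its extension by zero is a multiple of $\phi_n$ on $S^1$. That multiple vanishes on an arc while $\phi_n$ has finitely many zeros, so $g=0$. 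Hence $\{\phi_m\}_{m\neq n}$ is complete in $L^2(a,b)$, the angle is zero, and no biorthogonal family exists at all. The paper's later Gramian/Gershgorin step would be fine if the $g_n$ existed; indeed $|c_n|\le M\|F\|_{L^2(a,b)}$ alone would suffice. This extension-by-zero argument is the rigorous version of your density heuristic. Your normalization there is off: $\mathbb{Z}$ has density $1$, and the critical interval length is $2\pi$ times the density.

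Two points in your write-up should be corrected. The Paley--Wiener bound $\|\psi_n\|\lesssim|n|^{3/2-\epsilon}$ in your second paragraph asserts a family that you then correctly show cannot exist, so remove it rather than leave it as a step. Your remark that the same scheme yields Proposition \ref{UM} is wrong for the same reason. Whenever $S\subsetneq\tilde R$, take a nonzero $F\in L^2(\tilde R)$ supported in $\tilde R\setminus\overline S$ and expand it in the Dirichlet eigenbasis. The left side of that inequality vanishes while the right side is positive.

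Your closing suggestion is the right diagnosis. In Sections 3 and 4 these spatial inequalities are applied at each fixed $t$, to coefficient sequences that are effectively arbitrary. The observability estimates built on them would therefore need a genuine space--time argument that uses the temporal frequencies, not a pointwise-in-$t$ bound.
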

The proof of this result is deferred until the appendix. 

The normalized eigenfunctions for the Dirichlet Laplacian on $D$, with their corresponding eigenvalues, are well known to be 
$$
\f_{m,n}(r,\theta)=\frac{\sqrt{2}}{|J_{n+1}(\beta_{m,n})|}J_n(\beta_{m,n}r)\phi_n(\theta ),\ \om_{m,n}= \beta_{m,n}^2   +n^2,\ m\in \bN, \ n\in \bZ .
$$
where for each $n$,  $\beta_{m,n}>0$ are the solutions, listed in increasing order, of $J_n(\beta )=0$, and 
$J_n$ are the bounded solutions of the Bessel equations 
$$
r^2x''(r)+rx'(r)+(\la r^2-n^2)=0, \ r(1)=0.
$$

Letting $T>0$, consider the adjoint to the system 
\rq{beam2o}-\rq{init2o}:
\begin
{eqnarray}
w_{tt}+A^2 w-\rho A^\al w_t & = & 0,\label{wD}\\
w|_{S^1}=\Delta w|_{S^1}& = & 0\\
w(x,T) & =& w^0(x)\\
w_t(x,T) & = & w^1(x).\label{init_prime}
\end{eqnarray}
Arguing as in previous sections, 
\rq{wD} implies
\beq
w(r,\theta ,t)=\sum_{k\in \bK,n\in \bZ} c_{k,n}e^{i\la_{k,n} (T-t)}\f_{k,n}(r,\theta),\label{du2d}
\eeq
where 
\beq\label{LaD}
\Lambda=\{\la_{k,n},k\in \bK, n\in\bZ\},\ 
		\la_{k,n} =\left \{
		\begin{array}{cc}
			-i\la_{k,n}^+, & k> 0,\\
			-i\la_{|k|,n}^-, & k<0,
		\end{array}
		\right .
\eeq
\beq 
\la_{m,n}^{\pm}=\frac{-\rho \om_{m,n}^{\al}\pm \sqrt{\rho^2\om_{m,n}^{2\al}-4\om_{m,n}^{2}}}{2}, 
\eeq
$\f_{k,n}:=\f_{|k|,n}$, $\om_{k,n}=\om_{|k|,n}$, and 
\begin{eqnarray}\label{cn1c}
c_{k,n}=&(w_{k,n}^1-i\la_{k,-n}w_{k,n}^0)/\sqrt{\rho^2\om_{k,n}^{2\al} -4\om_{k,n}^2},\ k>0 ,\\
c_{k,n}=&(-w_{k,n}^1+i\la_{|k|,n}w_{k,n}^0)/\sqrt{\rho^2\om_{k,n}^{2\al}-4\om_{k,n}^2},\ k<0.
\end{eqnarray}

Applying a standard duality argument to \rq{weak1b}, we see that 
proving Theorem \ref{prod3} is equivalent to the following observability estimate.
\begin{prop}\label{thm9}
Let $\Gamma_1$ be an interval in $S^1$. Let $T>0$.
The following estimate holds:
$$
\int_0^T\int_{\Gamma_1}
| (\pa_r \Delta  w)(1,\theta ,t)|^2\ d\theta dt\geq C(\| w(*,*,0)\|_{X^3}^2+\| w_t(*,*,0)\|^2_{X^1}),
$$
with $C$ independent of $w^0,w^1$.
\end{prop}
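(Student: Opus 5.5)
The plan follows the proof of Proposition \ref{thm7}, with the roles of the two indices exchanged: the angular index $n\in\bZ$ plays the role of $m$, and the radial index $k\in\bK$ plays the role of the one-dimensional index.

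\textbf{Step 1: the observed trace.} Write $w$ as in \rq{du2d}. Since $\Delta \f_{k,n}=-\om_{k,n}\f_{k,n}$, we get
$$
(\pa_r\Delta w)(1,\theta,t)=-\sum_{n\in\bZ}\phi_n(\theta)\sum_{k\in\bK}\om_{k,n}\,d_{k,n}\,c_{k,n}e^{i\la_{k,n}(T-t)},
$$
where $d_{k,n}=\sqrt2\,\beta_{|k|,n}J_n'(\beta_{|k|,n})/|J_{n+1}(\beta_{|k|,n})|$. Using $J_n'(\beta)=-J_{n+1}(\beta)$ at a zero of $J_n$, this gives $|d_{k,n}|=\sqrt2\,\beta_{|k|,n}$, which is essentially $\sqrt{\om}$-size. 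For fixed $t$, apply Proposition \ref{UMa} on $\Gamma_1$ to obtain
$$
\int_0^T\int_{\Gamma_1}|\pa_r\Delta w|^2\geq C\sum_n h(n)^{-2}\int_0^T\Big|\sum_k \om_{k,n}\beta_{|k|,n}c_{k,n}e^{i\la_{k,n}(T-t)}\Big|^2dt.
$$

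\textbf{Step 2: the time integral for each fixed $n$.} Invoke Theorem \ref{win}, part A, for the family $\La^n=\{\la_{k,n}:k\in\bK\}$ with $T'=T/2$. Each inner integral is then bounded below by $C\sum_k\om_{k,n}^2\beta_{|k|,n}^2|c_{k,n}|^2e^{-\rho\om_{k,n}^\al T/2}$.

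\textbf{Step 3: comparison with the energy.} As in \rq{e0}, the energy equals $\sum|c_{k,n}|^2e^{-\rho\om_{k,n}^\al T}\,\om^{1/2}(\om^{3/2}\text{-type weight})$, which is at most $C\sum|c_{k,n}|^2\om_{k,n}^3e^{-\rho\om^\al T}$. Two facts are needed here.
\begin{itemize}
\item The lost polynomial factor $h(n)^{-2}=n^{-4}$ satisfies $n^4\le \om_{k,n}^2$. It is therefore absorbed by the surplus exponential $e^{-\rho\om^\al T/2}$, because $\sup_\om \om^{2}e^{-\rho\om^\al T/2}<\infty$ (with a constant that is $e^{C/T}$-type).
\item We need $\beta_{|k|,n}^2\gtrsim \om_{k,n}$, or at least $\beta^2\gtrsim\om^{-p}$. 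Indeed $\beta_{1,n}>n$ (the first zero of $J_n$ exceeds $n$), so $\beta^2\ge \om/2$ given the stated $\om=\beta^2+n^2$. If $\om$ is instead just $\beta^2$, then $\beta^2=\om$ directly.
\end{itemize}

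\textbf{Main obstacle.} The hard step is verifying the hypotheses of Theorem \ref{win} uniformly in $n$. We need a uniform separation \rq{separ} and the counting bound \rq{nu} for $\La^n$, and now the spacing depends on the Bessel zeros $\beta_{k,n}$ rather than on $k^2$. I would use McMahon/Olver-type asymptotics: $\beta_{k+1,n}-\beta_{k,n}\ge c>0$ uniformly in $n$ (interlacing plus $\beta_{k+1,n}-\beta_{k,n}\to\pi$, and $\ge\pi$-ish for $n\ge1/2$). This gives $\om_{k+1,n}-\om_{k,n}\gtrsim\beta_{k,n}$. For $\al<1/2$ and $\rho<2$, $\Im\la_{k,n}=\rho\om^\al/2$ varies slowly while $\Re\la\sim\om$, so the gap is uniform and $\nu^n(r)\lesssim r^{1/2}$ uniformly; this mirrors the rectangle case. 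I would check that the constants in Theorem \ref{win} depend only on $R_0,C_1,C_2$, which makes the bound uniform in $n$ and closes the estimate.
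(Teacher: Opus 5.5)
Your outline follows the paper's proof step for step: expand the trace, apply Proposition~\ref{UMa} in $\theta$ at each fixed $t$, apply Theorem~\ref{win} in $t$ for each fixed $n$ with $T'=T/2$, then compare with the energy. Your bookkeeping is more careful than the paper's. You correctly use $|J_n'(\beta_{k,n})|=|J_{n+1}(\beta_{k,n})|$, you absorb $h(n)^2=n^4\le\omega_{k,n}^2$ into the surplus factor $e^{-\rho\omega^\alpha T/2}$, and you check $\beta_{k,n}^2\gtrsim\omega_{k,n}$. Uniformity of Theorem~\ref{win} in $n$, which you single out as the main obstacle, is not where things break. Bessel-zero gaps bounded below give $\nu^n(r)\le C\sqrt{r}+1$ uniformly in $n$.

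\textbf{The gap, shared with the paper: Step~1.} Proposition~\ref{UMa} is false for every proper arc.
\begin{itemize}
\item \emph{Explicit counterexample.} Take $(a,b)=(0,\pi)$. The sines alone are complete in $L^2(0,\pi)$, so $1-\frac{4}{\pi}\sum_{j\ \mathrm{odd}}\frac{\sin jx}{j}=0$ there. This function is $\sum c_n\phi_n$ for an $\ell^2$ sequence with $|c_0|=\sqrt{2\pi}$. Hence the left side of Proposition~\ref{UMa} is $0$, while the right side is at least $2\pi C$. Truncating the series gives finite sums violating the inequality for every $C>0$.
\item \emph{General proper arc.} If $(a,b)\neq S^1$, take $g\in L^2(a,b)$ orthogonal to every $\phi_n$ with $n\neq n_0$. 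Extended by zero, $g$ is a multiple of $\phi_{n_0}$ that vanishes on an open set, so $g=0$. Thus $\{\phi_n|_{(a,b)}\}$ is not even minimal, and no lower bound $\sum_n w_n|c_n|^2$ with all $w_n>0$ can hold.
\item \emph{Where the appendix goes wrong.} Pairwise angles bounded below say nothing about the angle between $\phi_n$ and the closed span of the others.
\end{itemize}

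Consequently, ``apply Proposition~\ref{UMa} for fixed $t$ and integrate in $t$'' does not produce your lower bound $\sum_n h(n)^{-2}\int_0^T|a_n(t)|^2\,dt$. Your Steps~2--3 are fine conditionally, but they act on an inequality that is not available.

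What is missing is a way to separate different angular modes on $\Gamma_1\times(0,T)$ jointly in space and time. The coefficient functions $a_n(t)=\sum_k\omega_{k,n}\beta_{k,n}c_{k,n}e^{i\lambda_{k,n}(T-t)}$ carry $n$-dependent frequencies, so such separation is not ruled out. It cannot, however, come from a pointwise-in-$t$ angular estimate.

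The natural substitute, a Lebeau--Robbiano type spectral inequality, loses a factor of order $e^{C\sqrt{\omega}}$. The surplus $e^{-\rho\omega^\alpha T/2}$ cannot absorb this when $\alpha<1/2$. So, as written, neither your proposal nor the paper's proof establishes the proposition, and closing the gap needs a genuinely new idea rather than a local fix.
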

Proof: 
By \rq{du2d},
\begin{eqnarray*}
(\Delta w_r(1,\theta ,t))& =&\sum_{k\in \bK ,n\in \bZ} \om_{k,n}c_{k,n}e^{i\la_{k,n} (T-t)}\pa_r\f_{k,n}(1,\theta)\\
& =  & \sum_{k\in \bK,n\in \bZ} \om_{k,n}c_{k,n}e^{i\la_{k,n} (T-t)}\frac{\sqrt{2}\beta_{k,n}J_n'(\beta_{k,n})}{|J_{n+1}(\beta_{k,n})|}\phi_n(\theta).
\end{eqnarray*}
We begin to estimate $\int \int|\frac{\pa \Delta w}{\pa r} (1,\theta, t)|^2$. By Proposition \ref{UMa}, we have
$$
\int_{\Gamma_1}|\sum_n\sum_k\om_{k,n}c_{k,n}e^{i\la_{k,n}(T-t)}\frac{\sqrt{2}\beta_{k,n}J_n'(\beta_{k,n})}{|J_{n+1}(\beta_{k,n})|}\phi_n(\theta)|^2d\theta$$
$$\geq\sum_n\frac{1}{h(n)^2}|\sum_k\om_{k,n}c_{k,n}e^{i\la_{k,n}(T-t)}\frac{\sqrt{2}\beta_{k,n}J_n'(\beta_{k,n})}{|J_{n+1}(\beta_{k,n})|}|^2.$$
We now list some
 properties of Bessel functions, see  \cite{K}. For any $n$,
\begin{itemize}
\item  The terms $\{ \beta_{m,n}\}$ are
simple, and form a strictly increasing sequence tending to infinity.

\item The difference sequence $\beta_{m+1,n}-\beta_{m,n}$ converges to $\pi$.

\item The difference sequence $\beta_{m+1,n}-\beta_{m,n}$ is decreasing. 

\item  $J_n'(\beta_{m,n})=-J_{n+1}'(\beta_{m,n}).$

\end{itemize}
Fix $n$. Then
since $\rho < 2$ and $\al \leq 1/2$, we have 
$\sqrt{\rho^2\om_{k,n}^{2\al}-4\om_{k,n}^2}$
is purely imaginary. It is then easy to see that 
$\{\la_{k,n}: k\in \bK\} \subset \bC^+$ is simple, and there exists a constant $\gamma >0$, independent of $n$, such that the gap condition is satisfied:
$$
\inf_{j\neq k}|\la_{j,n}-\la_{k,n}|>\gamma .
$$
Hence by Theorem \ref{win}, where we set $T'=T/2$,  there exists a constant $C$ such that 
$$
\int_0^T
|\sum_k\om_{k,n}c_{k,n}e^{i\la_{k,n}(T-t)}\frac{\sqrt{2}\beta_{k,n}J_n'(\beta_{k,n})}{|J_{n+1}(\beta_{k,n})|}|^2\ dt
\geq 
C 
\sum_k |\om_{k,n}c_{k,n}\beta_{k,n}|^2e^{-\rho \om_{k,n}^\al T/2}.
$$
Combining, we get 
\beq 
\int_{\Sigma_1}|\frac{\pa \Delta w}{\pa r}(1,\theta, t)|^2\ d\theta dt\geq C 
\sum_n \frac{1}{h(n)^2}\sum_k |\om_{k,n}c_{k,n}\beta_{k,n}|^2e^{-\rho \om_{k,n}^\al T/2}.\label{lb}
\eeq
On the other hand, 
\begin{eqnarray}
\| w(*,*,0)\|^2_{X^3}+\|w_t(*,*,0)\|^2_{X^1}&=&
\sum_{m\in \bN}\sum_{n \in \bZ}
|c_{m,n}|^2(\om_{m,n}^{3/2}+\om_{m,n}^{1/2}|\la_{m,n}|)e^{-\rho\om_{m,n}^\al T}\nonumber \\
&=&\frac{1}{2}
\sum_{k\in \bK}\sum_{n \in \bZ}
|c_{k,n}|^2(\om_{k,n}^{3/2}+\om_{k,n}^{1/2}|\la_{k,n}|)e^{-\rho\om_{k,n}^\al T}. \label{ini}
\end{eqnarray}
Then
combining \rq{lb} and \rq{ini}, the observability estimate follows. $\Box$

\section{Well-posedness and controllability in classical case: $\al =1$.}
We assume 
	$\al =1 $ and $\rho <2.$ Let $N\geq 2$.
    
In this section, we adopt the notation of Section \ref{3.1}. Thus 
    we consider $\Omega=(0,\pi)\times M$, with $M\subset \bR^{N-1}$ a bounded $C^2$ domain,
with standard coordinates labelled $(x,y_1,\ldots ,y_{N-1}) $. Also, we label $y=(y_1,....y_{N-1})$.   
    It will be sometimes convenient to write the Laplacian for $\Omega$ is 
    $\Delta =\pa_x^2+\Delta_y$. Let  $\Gamma_1$ be a relatively open subset of the face $\{ x=\pi\}$, and $\Gamma_2$  the interior of $\Gamma\setminus \Gamma_1$.

We discuss null-controllability for 
\begin{eqnarray}
u_{tt}+A^2 u+\rho A u_t& = & 0,\label{beamd1m}\\
u(*,*,t) & = & 0, \mbox{ on }\Sigma_2,\label{bc1dm}\\
u(\pi,y,t) & = & f(t,y)\mbox{ on }\Sigma_1,\label{bc2dm}\\ 
\Delta u& = & 0\mbox{ on }\Sigma,\label{bc3dm}\\
u(x,y,0)=u^0(x,y),\ u_t(x,y,0)& = & u^1(x,y).\label{initdm}
\end{eqnarray}
For this, we will use the moment method.

Consider an eigenvalue problem
$$
- \psi ''  = \la \psi ,\
\psi (0) =\psi (\pi)= 0.\\
$$
Denote an orthonormal  basis of eigenfunctions by $\{ \psi_n; \ n\in\bN\} $, 
$\psi_n(x)=\sqrt{\frac{2}{\pi}}\sin ( n x)$, with corresponding eigenvalues 
$n^2.$
The eigenvalues of $\Delta_{M}$ are denoted $\{ -\kappa_m,m\in \bN \}$, with corresponding normalized eigenfunctions $\phi_m(y)$, and hence the eigenvalues of $A$ are 
 \beq 
 \om_{m,n}^2:=n^2+\kappa_m,\ \kappa_m\sim m^{2/(N-1)} \label{om}
 \eeq 
 with corresponding normalized eigenfunctions $\f_{m,n}(x,y)=\psi_n(x)\phi_m(y)$. 

Fix $t$. To discuss well-posedness, we first
 find $Z=Z(x,y,t)$ solving
$$
\Delta Z=0 \mbox{ on }\Omega, \ Z|_{\Gamma_1}=f(\pi ,y,t),\ Z|_{\Gamma_{2}}=0.
$$
We write $f(\pi ,y, t)=\sum_m f_m(t)\phi_m(y).$
By a standard Fourier series argument, we have
$$
Z(x,y,t)=\sum_m \frac{f_m(t)}{\sinh (\kappa_m \pi )}\sinh (\kappa_mx)\phi_m(y).
$$

\begin{prop}\label{wpd}
Let $T>0$ and $f\in H_*^2(0,T;L^2(M))$. 
Let $u$ solve our Dirichlet control system \rq{beamd1m}-\rq{initdm}. 
Suppose $(u^0,u^1)\in X^3\times X^1$. 
Then \
$$
u^f(x,y,t)=Z(x,y,t) +v(x,y,t),
$$
with 
$$
v\in C(0,T;X^3)\cap C^1(0,T;X^1).
$$
Thus 
$$
u\in C^1(0,T;X^0).
$$
\end{prop}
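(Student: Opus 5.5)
We split off the boundary data with the harmonic lift $Z$ and write $u=Z+v$. Since $\Delta Z=0$ in $\Omega$, we have $AZ=0$ formally and $A^2Z=0$. Also $Z$ vanishes on $\Sigma_2$, equals $f$ on $\Sigma_1$, and satisfies $\Delta Z=0$ on $\Sigma$. Hence $v=u-Z$ satisfies homogeneous boundary conditions $v|_\Sigma=\Delta v|_\Sigma=0$ and solves
$$
v_{tt}+A^2v+\rho Av_t=-Z_{tt}-\rho\,(-\Delta)Z_t=-Z_{tt},
$$
because $\Delta Z_t=0$ as well. Here $\rho A u_t$ is read classically as $-\rho\Delta u_t$, which is the natural meaning when $\alpha=1$. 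The initial data become $v(0)=u^0-Z(0)=u^0$ and $v_t(0)=u^1-Z_t(0)=u^1$, using $f(\cdot,0)=f'(\cdot,0)=0$ from $f\in H^2_*$. So $v$ solves an inhomogeneous abstract equation with forcing $F:=-Z_{tt}\in L^2(0,T;L^2(\Omega))$, provided we show $\|Z_{tt}(t)\|_{L^2(\Omega)}\le C\|f''(t)\|_{L^2(M)}$.

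Next we estimate $Z$. Using the Fourier representation (with $\sqrt{\kappa_m}$ in place of $\kappa_m$ as the correct frequency), we have
$$
\int_0^\pi \frac{\sinh^2(\sqrt{\kappa_m}x)}{\sinh^2(\sqrt{\kappa_m}\pi)}\,dx\le C\kappa_m^{-1/2}\le C
$$
uniformly in $m$. Hence $\|Z_{tt}(t)\|_{L^2}^2\le C\sum_m|f_m''(t)|^2=C\|f''(t)\|^2_{L^2(M)}$ by orthonormality of $\phi_m$. In the same way $Z,Z_t\in C(0,T;L^2(\Omega))$, since $H^2(0,T)\subset C^1$.

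For $v$ we expand $v=\sum a_{m,n}(t)\varphi_{m,n}$. Each coefficient solves $a''+\rho\omega a'+\omega^2a=F_{m,n}(t)$, with roots $\lambda^\pm$ whose real parts are $-\rho\omega/2$ (here $\omega=\omega_{m,n}$ is the eigenvalue of $A$). The homogeneous part is a contraction semigroup in the energy space $X^3\times X^1$ (equivalently, by Lemma \ref{wpadj} with the time direction reversed). The main obstacle is the forcing term: Duhamel only gives $v\in C(X^2)\times C(X^0)$ from $F\in L^2(0,T;X^0)$ in the undamped case, which falls short of $X^3\times X^1$. This is exactly where the damping helps. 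The variation-of-constants formula gives
$$
a(t)=\int_0^t\frac{e^{\lambda^+(t-s)}-e^{\lambda^-(t-s)}}{\lambda^+-\lambda^-}\,F_{m,n}(s)\,ds .
$$
Here $|\lambda^+-\lambda^-|\asymp\omega$ (since $\rho<2$), and $|e^{\lambda^\pm\tau}|=e^{-\rho\omega\tau/2}$. By Cauchy--Schwarz,
$$
|a(t)|^2\le C\omega^{-2}\cdot\omega^{-1}\int_0^t|F_{m,n}|^2,\qquad |a'(t)|^2\le C\omega^{-1}\int_0^t|F_{m,n}|^2 .
$$
Thus $\omega^3|a|^2+\omega|a'|^2\le C\|F_{m,n}\|^2_{L^2(0,T)}$, with the parabolic smoothing gaining exactly the half-power needed. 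Summing over $(m,n)$ gives $v\in L^\infty(0,T;X^3)\times W^{1,\infty}(0,T;X^1)$. Continuity in time follows from dominated convergence termwise, or by density of smooth $F$.

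Finally, $u=Z+v$, with $Z\in C^1(0,T;L^2(\Omega))$ and $v\in C^1(0,T;X^1)\subset C^1(0,T;X^0)$, so $u\in C^1(0,T;X^0)$. Uniqueness follows because the difference of two solutions solves the homogeneous problem with zero data.
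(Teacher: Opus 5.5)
Your proposal is correct and follows essentially the paper's route. Both arguments use the harmonic lift $u=Z+v$, the vanishing of $Z(0)$ and $Z_t(0)$ from $f\in H^2_*$, the split of $v$ into free and forced parts, and mode-by-mode variation of parameters for the forced part. You differ in how you bound the forcing: you use $\|Z_{tt}(t)\|_{L^2(\Omega)}\le C\|f''(t)\|_{L^2(M)}$ (correctly taking $\sqrt{\kappa_m}$ as the transverse frequency) plus an explicit Cauchy--Schwarz against $e^{-\rho\omega(t-s)/2}$, where the paper uses the sine coefficients $\xi_{m,n}$ and the asymptotics \rq{est1}; this makes explicit the damping-induced half-power gain that the paper's ``Hence'' leaves implicit.
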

Proof:
We assume for the moment that $f\in C_*^2((0,T)\times M)$.
Set $v=u-Z.$
	Then $v$ satisfies
	\begin{eqnarray}
		v_{tt}+\Delta^2 v-\rho\Delta v_t & = & -Z_{tt},\label{v1p}\\
		v|_{\Gamma}=\Delta v|_{\Gamma}& = & 0\\
		v(x,y,0) & =& u^0(x,y)-Z(x,y,0)\\
		v_t(x,y,0) & = & u^1(x,y)-Z_t(x,y,0).\label{v4p}
	\end{eqnarray}
By hypothesis, we have 
	$$
	f_m(y,0)=f_m'(y,0)=0, \ \forall m ,
	$$
hence 
$$
Z(x,y,0)=Z_t(x,y,0)=0.
$$
    
	We will control $u$ by controlling $v+U.$ 
We further decompose our problem by setting $v=v^0+v^f$, where $v^0$ is the solution of 
	\rq{v1p}-\rq{v4p} with $f=0$, and $v^f$ is the solution with $u^0=u^1=0.$

We write $\sinh (\kappa_mx)=\sum_n \xi_{m,n}\psi_n(x)$.

	Set $v^f=\sum a_{m,n}(t)\f_{m,n}(x,y)$. Then 
		\rq{v1p} implies
		$$
		\sum_{m,n} (a_{m,n}''+\rho a_{m,n}'\om_{m,n}+a_{m,n}\om_{m,n}^2  )\f_{m,n}(x)=-\sum_{m,n} \frac{f_m''(t)}{\sinh (\kappa_m \pi )}\sinh (\kappa_mx)\phi_m(y).
		$$
		hence, for each $m$,
		$$
		\sum_n(a_{m,n}''+\rho \om_{m,n}a_{m,n}'+\om_{m,n}^2a_{m,n})\psi_n(x)=-\sum_{n} \frac{f_m''(t)}{\sinh (\kappa_{m} \pi )}\xi_{m,n}\psi_n(x),\ a_{m,n}(0)=a_{m,n}'(0)=0,
	$$
hence 
\beq
		a_{m,n}''+\rho \om_{m,n}a_{m,n}'+\om_{m,n}^2a_{m,n}=-\frac{f_m''(t)}{\sinh (\kappa_{m} \pi )}\xi_{m,n},\ a_{m,n}(0)=a_{m,n}'(0)=0,\ \forall m,n.\label{anp}
\eeq

		In what follows, it will be convenient to 
		set
		$$\beta_{m,n}=\frac{-\rho \om_{m,n}}{2}, \ 
		\al_{m,n}=\om_{m,n}\frac{\sqrt{4-\rho^2}}{2},\mbox{ and }\la_{m,n}^{\pm}=\beta_{m,n}\pm i\al_{m,n}.
		$$
	
		We next find $v^f(x,y,t)$ using variation of parameters in \rq{anp}.
		We have the  Wronskian equalling
		$-2i\al_{m,n}e^{-2\beta_{m,n}t}$.
		Hence, for all $m,n$, we have
        \begin{eqnarray}
			a_{m,n}(t)
			& =& \frac{\xi_{m,n}}{2i\al_{m,n}\sinh(\kappa_{m}\pi)}
			\int_0^t f_m''(s)\big (
			e^{\la_{m,n}^+(t-s)}-e^{\la_{m,n}^-(t-s)}\big )ds,
			 \label{vf2p} \\
            a_{m,n}'(t)
			& =& \frac{\xi_{m,n}}{2i\al_{m,n}\sinh(\kappa_{m}\pi)}
			\int_0^t f_m''(s)\big (
			(\la_{m,n}^+e^{\la_{m,n}^+(t-s)}-\la_{m,n}^-e^{\la_{m,n}^-(t-s)}\big )ds. \label{vfp}
		\end{eqnarray}		
 These formulae were derived assumed $f\in C_*^2$, but we now extend them by continuity to 
 $f\in H_*^2(0,T;L^2(M))$.

 We have, for fixed $m$,
\beq
\frac{|\xi_{m,n}|}{\sinh (\kappa_m \pi)}\asymp \frac{n}{n^2+\kappa_m^2}
,\ |\al_{m,n}| \asymp |\la_{m,n}|\asymp \om_{m,n}\asymp (n^2+\kappa_m).\label{est1}
\eeq
Hence by \rq{vf2p}, \rq{vfp}, we get
$$
v^f\in C(0,T;X^3)\cap C^1(0,T;X^1).
$$
It is easy to see that 
$$
v^0\in C(0,T;X^3)\cap C^1(0,T;X^1), \mbox{ and }Z \in C^1(0,T;X^0),
$$
so  Proposition \ref{wpd}  follows from 
$u^f=v^f+v^0+Z$.$\Box$

{\bf Proof of Theorem \ref{prod1}:}
	The null controllability is for \rq{beamd1m}-\rq{initdm} is equivalent to the existence of $f$ such that 
	\beq 
	v^f(x,y,T)=-v^0(x,y,T); \ v_t^f(x,y,T)=-v^0_t(x,y,T) .
	\label{cont2p}
	\eeq

To prove the theorem, we will find an associated moment problem. We need to express the ``free wave", $v^0$, as a Fourier series. Suppose for $j=0,1$, the initial conditions have Fourier expansion $u^j(x,y)=\sum_{m,n} u_{m,n}^j\f_{m,n}(x,y)$. Then
		from the above, 
		$$v^0(x,y,t)=\sum_{m,n}\big ( c^1_{m,n}e^{\la_{m,n}^+t}+c^2_{m,n}e^{\la_{m,n}^-t}\big )\f_{m,n}(x,y),$$
		with 
		$$c_{m,n}^2=\frac{\la_{m,n}^+u_{m,n}^0-u_{m,n}^1}{\la_{m,n}^+-\la_{m,n}^-},
		c^1_{m,n}=u^0_{m,n}-c^2_{m,n}.
		$$
We use the following notation: 
		\beq \label{tc1p}
		-v^0(x,y,T)=
		\sum_{m,n}\big (- c^1_{m,n} e^{\la_{m,n}^+T}
		-c_{m,n}^2e^{\la_{m,n}^-T}  \big )
		\f_{m,n}(x,y)=:\sum_{m,n}\chi_{m,n}^1\f_{m,n}(x,y),
		\eeq 
		\beq \label{tc2p}
		-v^0_t(x,y,T)=\sum_{m,n}\big (- c^1_{m,n}\la_{m,n}^+e^{\la_{m,n}^+T}
		-c_{m,n}^2\la_{m,n}^-e^{\la_{m,n}^-T}  \big )
		\f_{m,n}(x,y)=:\sum_{m,n}\chi_{m,n}^2\f_{m,n}(x,y).
		\eeq 
Thus 
\beq
\om_{m,n}^{3/2}|\chi_{m,n}^1|\leq Ce^{-\rho \om_{m,n}T/2};\ 
\om_{m,n}^{1/2}|\chi_{m,n}^2|\leq Ce^{-\rho \om_{m,n}T/2}.\label{chi}
\eeq
By \rq{cont2p}, \rq{vf2p},\rq{vfp}, \rq{tc1p},\rq{tc2p}, we get 
\begin{eqnarray*}
\chi_{m,n}^1 & =& \frac{\xi_{m,n}}{2i\al_{m,n}\sinh(\kappa_{m}\pi)}\int_0^T\big ( f_m''(s)\big )(e^{\la_{m,n}^+(T-s)}-e^{\la_{m,n}^-(T-s)})ds,\ \forall m,n, \\
 \chi_{m,n}^2& =& \frac{\xi_{m,n}}{2i\al_{m,n}\sinh(\kappa_{m}\pi)}
			\int_0^T\big ( f_m''(s)\big )
			(\la_{m,n}^+e^{\la_{m,n}^+(T-s)}-\la_{m,n}^-e^{\la_{m,n}^-(T-s)})ds,
			\forall m,n. 
		\end{eqnarray*}
For $j=1,2$, let  
$$\zeta_{m,n}^j=\frac{2i\al_{m,n}\sinh(\kappa_m\pi)
\chi_{m,n}^j}{\xi_{m,n}}.$$
Thus for fixed $m$, by \rq{est1} 
\beq
|\zeta_{m,n}^1|\leq C\om_{m,n}^{3/2}e^{-\rho \om_{m,n}T/2};\ 
|\zeta_{m,n}^2|\leq C\om_{m,n}^{5/2}e^{-\rho \om_{m,n}T/2}.\label{zeta}
\eeq

Then
we rewrite the equations above as
 \begin{eqnarray}
\tau_{m,n}^+:=\frac{\zeta_{m,n}^2-\la^-_{m,n}\zeta_{m,n}^1}{\al_{m,n}} & =& \int_0^T f_m''(s)e^{\la_{m,n}^+(T-s)}ds,\ \forall m,n, \label{md1p} \\
\tau_{m,n}^-:=\frac{\zeta_{m,n}^2-\la^+_{m,n}\zeta_{m,n}^1}{\al_{m,n}}& =& 
			\int_0^Tf_m''(s)
			e^{\la_{m,n}^-(T-s)}ds,
			\forall m,n. \label{md2p}
		\end{eqnarray}       
Thus 
$$
|\tau_{m,n}^\pm|\leq C\om_{m,n}^{3/2}e^{-\rho \om_{m,n}T/2},$$
and furthermore, there exists a constant independent of the initial conditions such that
\beq
\sum_\pm \sum_{m,n}|\tau_{m,n}^\pm|^2e^{\rho T\om_{m,n}}
\leq C(\| u^0\|^2_{X^3}+\| u^1\|^2_{X_1}).
  \label{tau}
\eeq      
Requiring $f\in H^2_0(0,T;L^2(M))$ is equivalent to stipulating that $f''$ also satisfy
\begin{eqnarray}
\tau_{m,0}^+:=0 &= & \int_0^T f_m''(s)ds,\label{md3p} \\
\tau_{m,0}^-:=0 &= & \int_0^T sf_m''(s)ds,\ \forall m.\label{md4p} 
\end{eqnarray}	
Our moment problem is thus \rq{md1p}-\rq{md4p}.
	
We now solve for  $f''$, hence for $f$. 
Fix $m$. We adapt the notation to Theorem \ref{win}.
$$
		\la_{k,m} =\left \{
		\begin{array}{cc}
			-i\la_{k,m}^+, & k> 0,\\
			-i\la_{|k|,m}^-, & k<0,
		\end{array}
		\right .
$$
We then set $\la_{0,m}=0$, and 
consider the exponential family in $L^2(0,T)$:
$$
{\cal E}_m =\{ s, e^{s\la_{k,m}},\ k\in \bZ\}.
$$
It is easy to verify that there exists $\delta>0$, independent of $m$, such that 
$$
k\neq k'\mbox{ implies }\ |\la_{k',m}-\la_{k,m}|>\delta .
$$
Furthermore, in the notation of Section \ref{wins},  $\nu_m (r)$ can be chosen so
$$C_1r^{1/2}<\nu_m (r)<C_2r^{1/2}$$
with $C_1,C_2$ positive constants independent of $m$.
 Thus there exists a family of functions $\{  g_{0,2,m}(t),g_{l,m}(t);\ l\in \bZ , \} $ in $L^2(0,T)$ satisfying 
$$  \< g_{l,m},t\> =0,\ \< g_{l,m},e^{i\la_{k,m}t}\> =\delta_{l,k},\ \< g_{0,2,m},t\>=1,\  \< g_{0,2,m},e^{i\la_{k,m}t}\> =0 ,\ \forall m.
$$
Furthermore, by Theorem \ref{win}, part B, there exist positive constants $C_2,C_3$ depending  only on $R_0,T,C_0,C_1$ such that
$$
\| g_{l,m}\|_{L^2(0,T)}\leq C_2 e^{C_3(\Im (\la_{l,m}))^{1/2}}.
$$

Set $\tau_{m,0}^\pm =0$, $\tau_{m,k}=\tau^+_{m,k}$ for $k>0$, and $\tau_{m,k}=\tau^-_{m,k}$ for $k<0$.
Then formally, our moment problem for fixed $m$ is solved by 
$$f_m''(t)=\sum_{n\in \bK}\tau_{m,k} g_{k,m}(t),$$
and 
$$f''(y,t)=\sum_{m\geq 1}
\sum_{n\in \bK}\tau_{m,k} g_{k,m}(t)
\phi_m(y).$$
We now verify the convergence of the last series. Applying Parseval's equation, followed by Theorem \ref{win}, Part B, then \rq{tau}, we get
\begin{eqnarray*}
\int_{\Gamma_1}\int_0^T |\sum_{m\geq 1}\sum_{k\in \bK}\tau_{m,k} g_{k,m}(t)\phi_m(y)|^2dydt
&\leq & \int_M\int_0^T |\sum_{m\geq 1}\sum_{k\in \bK}\tau_{m,k} g_{k,m}(t)\phi_m(y)|^2dydt\nonumber\\
& = & \sum_m \int_0^T |\sum_{k\in \bK}\tau_{m,k}^\pm g_{k,m}(t)|^2dt\\
& \leq & \sum_m \sum_{k\in \bK}|\tau_{m,k} |^2\int_0^T |g_{k,m}(t)|^2dt\\
& \leq & \sum_m \sum_{k\in \bK}|\tau_{m,k}|^2C_2e^{2C_3\Re (-\la_{k,m})^{1/2}T}\nonumber\\
& \leq & \sum_m \sum_{k\in \bK}|\tau_{m,k} |^2C_2e^{2C_3\om_{m,|k|}^{1/2}}\\
& \leq & C(\| u^0\|^2_{X^3}+\| u^1\|^2_{X_1}).
\label{per}
\end{eqnarray*}
 Finally,
$$
f(t)=\int_{s=0}^t\int_{r=0}^sf''(r)\ dr\ ds.\Box
$$

\section{Appendix}
Define an orthonormal basis of $L^2(0,2\pi)$ by 
$$
\phi_n(x)=\left \{
\begin{array}{cc}
1/\sqrt{2\pi} & n=0,\\
\cos{nx}/\sqrt{\pi} & n>0,\\
\sin{nx}/\sqrt{\pi} & n>0.
\end{array}
\right .
$$

{\bf Proof of Proposition \ref{UMa}}: It can be shown, adapting the proof  (\cite{AEI}, Lemma 4), that the angle between 
$\phi_n,\phi_m$ in $L^2(a,b)$ is bounded away from zero if $n\neq m$. It follows that the angle between $\phi_n$ and the closure of the span of $\{ \phi_m: m\neq n\}$ in $L^2(a,b)$ is bounded below away from zero. 
It then follows, see \cite{AI}, that
$\{ \phi_n\}$ is uniformly minimal, i.e. it admits a biorthogonal family 
$\{ g_n(x): n\in \bZ\}$, and there exists a constant $M>0$ such that 
\beq
\| g_n\|_{L^2(a,b)}<M,\ \forall n.\label{geq}
\eeq
Let $\ell^2$ be the set of square summable sequences of complex numbers parametrized by $\bZ$, with inner product $\< *,*\>$ and norm $\| *\|_{\ell^2}$. Let $(*,*) $ be the
scalar product in $L^2(a,b)$, with corresponding norm $\| *\|_{\ell^2}$. Let $\| *\|$ be the operator norm for the appropriate Hilbert spaces. 
Let 
$f=\sum_{n\in \bZ}c_{n}\phi_n(x)$ be a finite sum, and let $Lf=\{ c_n/h(n)\}\in \ell^2$.
Then 
\begin{eqnarray}
\| Lf\|_{\ell^2}^2 & = & \sum_n |c_n|^2/h(n)^2\nonumber \\
& = & \sum_n \bar{c}_n(\sum_m \frac{c_m}{h(m)^2}g_m,\phi_n)\nonumber\\
& = &  ( \sum_m \frac{c_m}{h(m)^2}g_m,f)\nonumber\\
& \leq & \|f\|_2 \ \| \sum_m \frac{c_m}{h(m)^2}g_m\|_2.\label{A}
\end{eqnarray}

Let us introduce the Gramian matrix $\Gamma$: $l^2 \mapsto l^2$, with entries
$$
\Gamma_{n;m}= (  g_{n}/h(n), g_{m}/h(m)).
$$
Then, letting $Lf=\vec{c}=\{ c_{n}/h(n)\},$
\begin{eqnarray}
\|\sum \frac{c_n}{h(n)^2} g_{n}\|^2_2
& =& \sum_{m}c_{m}/h(m)\sum_{n}\Gamma_{n,m}\bar{c}_{n}/h(n)\nonumber \\
&=&  \< \vec{c},\overline{\Gamma\bar{\vec{c}}}\>\nonumber\\
& \leq & \|\Gamma\|  \|Lf\|_{\ell^2}^2.\label{B}
\end{eqnarray}
Combining \rq{A},\rq{B}, we obtain 
$$\|L\|\le\|\Gamma \|^{1/2}.$$
By the Gershgorin Theorem, 
$$
\|\Gamma  \|\leq \sup_{m}\{ \frac{1}{h(m)}\sum_{n}\frac{1}{h(n)}( g_{m},g_{n})\}.
$$
Thus there exists a constant $C>0$ such that, by  \rq{geq},
\begin{eqnarray*}
\sum_{n}\frac{1}{h(n)}|( g_{m},g_{n})|& \leq & 
CM^2.
\end{eqnarray*}
This proves the proposition for $f$ a finite sum. The proof is completed by 
  a density argument. $\Box$

\medskip

Finally, the proof of Proposition \ref{UM} is similar but easier than the proof that we just presented, so is left to the reader. 


\begin{thebibliography}{99}



\bibitem{AD} N. Abatangelo, L. Dupaigne,   ``Nonhomogeneous boundary conditions for the spectral fractional
Laplacian", Ann. I. H. Poincaré – AN 34 (2017) 439–467.




\bibitem{AE}
\newblock S.  Avdonin and J. Edward,
``Boundary null-controllability for the beam equation with classical structural damping", in submission.


\bibitem{AEI}
 Avdonin, Sergei; Edward, Julian; Ivanov, Sergei. { ``Null-controllability for the beam equation with structural damping. 
Part 1. Distributed control."} J. Differential Equations 421 (2025), 73–103.

\bibitem{AEI2} 
 Avdonin, Sergei; Edward, Julian; Ivanov, Sergei. 
Null-controllability for the beam equation with fractional
structural damping. Part 2. Boundary control. Preprint.

 
 
 



\bibitem{AI} 
\newblock S. A. Avdonin and S. A. Ivanov,
\newblock \emph{Families of Exponentials. The Method of Moments in Controllability Problems for Distributed Parameter Systems},
\newblock Cambridge University Press, New York, London, Melbourne, 1995.

\bibitem{AL}
G. Avalos and I. Lasiecka, "Optimal blowup rates for the minimal
energy null control of the strongly damped abstract wave
equation".  Ann. Sc. Norm. Super. Pisa Cl. Sci. (5) 2 (2003), no.
3, 601--616.


\bibitem{APR}  H. Antil, J. Pfefferer and S. Rogovs,
``Fractional Operators with  inhomogeneous Boundary
Conditions: Analysis, Control, and Discretization",
Commun. Math. Sci.  (2018),
Vol. 16, No. 5, pp. 1395–1426


\bibitem{Ba} A.V. Balakrishnan, ``Damping operators in continuum models of flexible structures: Explicit models for proportional damping in beam bending with end-bodies", Appl. Math. Optim. 21 (3) (1990) 315–334.

\bibitem{B}
F. Boyer,  ``Controllability of linear parabolic equations and systems". Master. France. 2022.
hal-02470625v4


\bibitem{CR} G. Chen and D.L. Russell, ``A mathematical model for
linear elastic systems with structural damping", Quart. Appl.
Math., 39, (1982), 433-454.

\bibitem{CS} Luis A. Caffarelli and Pablo Raúl Stinga, ``Fractional elliptic equations, Caccioppoli estimates and regularity",
 Annales de l'Institut Henri Poincaré C, Analyse non linéaire
Volume 33, Issue 3,  (2016), Pages 767-807.

\bibitem{D}  E.B. Davies, {\it Heat Kernels and Spectral Theory}, Cambridge Tracts in Mathematics, vol. 92, Cambridge University Press, Cambridge, 1989.


\bibitem{E} Edward, J. ``Complex Ingham type inequalities and applications to control theory",
Journal of Mathematical Analysis and Applications, 324 (2006)

\bibitem{E2} Edward. ``An extension of the spectral fractional Laplacian to non-homogeneous boundary condition on rectangular domains, with application to well-posedness for plate equation with structural damping", J. Math. Anal. Appl. 556 (2026), no. 1, part 1, Paper No. 130073, 22 pp. 

\bibitem{ET} Edward, J. and Tebou, L., ``Uniform internal controllability for structurally damped beam equation", Asymptotic Analysis, 47 (2006), 55-83.

\bibitem{H} S.W. Hansen, "Bounds on functions biorthogonal to sets of
complex exponentials; control of elastic damped systems", J. Math.
Anal. Appl. 158 (1991), 487-508

\bibitem{H2} Hansen, S. W.,
``Optimal regularity results in boundary control of elastic systems with fractional order damping", Contrôle des systèmes gouvernés par des équations aux dérivées partielles (Nancy, 1999), 53–64.
ESAIM Proc., 8 (2000)
Société de Mathématiques Appliquées et Industrielles, Paris.



\bibitem{K} V. Komornik and P. Loreti, {\em Fourier series in
control theory}, Springer Monographs in Mathematics, 2005.



\bibitem{LT} I. Lasieka and R. Triggiani,``Exact null-controllability of
structurally damped and thermoelastic parabolic models", Rend.
Mat. Acc. Lincet, s.9, v.9 (1998), p.43-69.



\bibitem{LR} G. Lebeau and L. Robbiano, ``Controle exact de l’equation de la chaleur", ´
Comm. Partial Diff. Eqs., 20 (1995), 335–356.

\bibitem{LZ} G. Lebeau and E. Zuazua,
``Null-Controllability of a System
of Linear Thermoelasticity",
Arch. Rational Mech. Anal. 141 (1998) 297–329.


\bibitem{lions1} J.L. Lions, {\em Controlabilit\'e exacte, perturbations, et stabilisation de syst\`emes distribu\'es, Tome1: Controlabilit\'e Exacte}, Recherches en mathematiques appliqu\'ees, 8. Masson, Paris 1988.



\bibitem{L} Lischke, Anna; Pang, Guofei; Gulian, Mamikon; et al.
``What is the fractional Laplacian? A comparative review with new results",
J. Comput. Phys. 404 (2020), 109009, 62 pp.

\bibitem{mil} Miller, Luc ``Non-structural controllability of linear elastic systems with structural damping", J. Funct. Anal. 236 (2006), no. 2, 592–608.


\bibitem{mit} Mitra, Sourav,  ``Carleman estimate for an adjoint of a damped beam equation and an application to null controllability". J. Math. Anal. Appl. 484 (2020), no. 1, 123718, 29 pp.

\bibitem{AIS} Seidman, T. I., Avdonin, S. A.; Ivanov, S. A., ``The "window problem'' for series of complex exponentials". J. Fourier Anal. Appl. 6 (2000), no. 3, 233–254.

\bibitem{SV}  R. Song, Z. Vondracek, ``Potential theory of subordinate killed Brownian motion in a domain", Probab. Theory Relat. Fields 125 (4) (2003)
578–592.


\bibitem{T1} R. Triggiani, 
``Regularity of Some Structurally Damped 
Problems with Point Control 
and with Boundary Control", Journal of Mathematical Analysis and Applications,  161, (1991) 299933


\bibitem{T2} R. Triggiani, "Optimal estimates of norms of fast controls in exact null controllability of two non-classical abstract parabolic systems". Adv. Differential Equations 8 (2003), no. 2, 189--229.





\bibitem{T}  R. Triggiani,
"Optimal quadratic boundary control problem for wave- and
plate-like equations with high internal damping: an abstract
approach." (English. English summary) With an appendix by Dahlard
Lukes. Lecture Notes in Pure and Appl. Math., 165, Control of
partial differential equations (Trento, 1993), 215--270, Dekker,
New York, 1994.



\bibitem{Y} R. M. Young, {\em An introduction to non-harmonicFourier series.  Revised first edition}, Academic Press, SanDiego, 2001.

\bibitem{Z2} E. Zuazua,``Exact controllability for semilinear wave equations
in one space dimension",
Annales de l’I. H. P., section C, tome 10, no 1 (1993), p. 109-129.



\bibitem{Z} E. Zuazua, "Exact Controllability for distributed systems for arbitrarily small time",
Proceedings of the 26th IEEE Conference on Decision and Control,
Los Angeles, 1987.



\end{thebibliography}
\end{document}